\newcommand{\black}{\color{black}}
\newcommand{\beq}{\begin{equation}}
\newcommand{\eeq}{\end{equation}}
\newcommand{\e}{\mathrm{e}}
\newcommand{\dd}{\mathrm{d}}
\newcommand{\taul}{\tau}
\newcommand{\vta}{u_{\tau}}
\newcommand{\uta}{u_{\tau}}
\newcommand{\unum}[1]{u^{#1}}
\newcommand{\un}{\unum{n}}
\newtheorem{theorem}{Theorem}[section]
\newtheorem{lemma}[theorem]{Lemma}
\newtheorem{cor}[theorem]{Corollary}
\newtheorem{proposition}[theorem]{Proposition}
\author{Alexander Ostermann}
\address{Department of Mathematics, University of Innsbruck, Technikerstr.~13, 6020 Innsbruck, Austria (A. Ostermann)}
\email{alexander.ostermann@uibk.ac.at}
\author{Fr\'ed\'eric Rousset}
\address{Universit\'e Paris-Saclay, CNRS,   Laboratoire de Math\'ematiques d'Orsay (UMR 8628),  91405 Orsay Cedex, France (F. Rousset)}
\email{frederic.rousset@universite-paris-saclay.fr}
\author{Katharina Schratz}
\address{LJLL (UMR 7598), Sorbonne Universit\'e, UPMC, 4 place Jussieu, 75005, Paris, France (K. Schratz)}
\email{katharina.schratz@ljll.math.upmc.fr}
\begin{document}
\begin{abstract}
We study  a filtered Lie splitting scheme  for the cubic nonlinear Schr\"{o}dinger equation. 
  We establish error estimates at low regularity by using   discrete Bourgain spaces. This allows us to handle data in $H^s$ with $0<s<1$ overcoming the standard stability restriction to smooth Sobolev spaces with index $s>1/2$ . More precisely, we prove convergence rates of order $\tau^{s/2}$ in $L^2$ at this level of regularity.
\end{abstract}

\title[]{Error estimates at low regularity \\ of splitting schemes for NLS}

\maketitle

\section{Introduction}

We consider the cubic nonlinear Schr\"{o}dinger equation (NLS)
\begin{equation}\label{nlsO}
\begin{aligned}
i \partial_t u &= - \partial_{x}^2 u -\mu  \vert u \vert^2 u, \qquad (t,x) \in \mathbb{R} \times {\Omega}
\end{aligned}
\end{equation}
equipped with periodic boundary conditions  $\Omega = \mathbb{T}$ or  set on the full space $\Omega = \mathbb{R}$. We allow  both the defocusing and focusing case $\mu = \pm 1$. In the last decades, a large variety of numerical schemes has been proposed to approximate the time dynamics of NLS. A particular attractive class of schemes are splitting methods, where the right-hand side  is split into the linear $T(u) = i \partial_{x}^2 u$  and nonlinear part $V(u) = i\mu\vert u \vert^2 u$, respectively. This splitting approach is  commonly used in practical computations as it  is easy to implement and preserves the mass in the system, i.e., the $L^2$ norm of the solution.   For smooth solutions the error behaviour of splitting methods is nowadays well  understood. If the initial value satisfies $u(0) \in H^2$ then the Lie splitting scheme
\begin{equation}\label{LieCl}
\begin{aligned}
u_{\text{Lie}}^{n+1}  &  = \e^{i \tau \partial_x^2} \left( \e^{i \mu \tau \vert  u_{\text{Lie}}^{n} \vert^2} u_{\text{Lie}}^{n} \right), \qquad u_{\text{Lie}}^0  = u(0)
\end{aligned}
\end{equation}
allows for global error estimates at order $\tau$ in $L^2$; see \cite{ESS16,Faou12,Ignat11,Lubich08}.   This  smoothness requirement, i.e., that  $u(0) \in H^2$, stems from the local approximation error. More precisely, one can show  that the  Lie splitting method  \eqref{LieCl} applied to  cubic NLS \eqref{nlsO} introduces a local error  of type $\tau^2 [T,V](u)$  with the commutator given by  (see \cite[Section 4.2]{Lubich08})
\begin{equation}\label{clELie}
 \frac{1}{2\mu} [T,V](u) =  \overline u \left(\partial_x u\right)^2 + 2u \left(\partial_x u\right)\left(\partial_x \overline u\right) + u^2 \partial_{xx} \overline u.
\end{equation}
From the above relation we readily see that  the last term $u^2 \partial_{xx} \overline u$ requires the boundedness of two additional derivatives of the solution such that global first-order convergence in $L^2$ will require (at least)  $H^2$ solutions. Indeed,  a classical Lady Windermere  argument (see \cite{HNW}) combined with the standard bilinear estimate
\begin{equation*}\label{bi}
\Vert v w \Vert_{H^\sigma} \leq C_\sigma \Vert v  \Vert_{H^\sigma}\Vert w \Vert_{H^\sigma}, \qquad \sigma > 1/2
\end{equation*}
 allows us to  deduce from the local error structure \eqref{clELie} global first-order convergence in $H^\sigma$ for solutions $u(t) \in H^{\sigma + 2}$ for any $\sigma > 1/2$. Using a refined global error analysis, by first proving fractional convergence of the scheme in a suitable higher order Sobolev space (which implies a priori the boundedness of the numerical solution in this space \cite{Lubich08}), one can push down the error analysis   to $L^2$ with a global error of order $\tau$ for $H^{2}$ solutions.

One may now wonder what happens for less smooth solutions  $u(t) \in H^s$ with $s < 2$. Clearly, we can no longer expect full first-order convergence due to the local error structure \eqref{clELie} which involves the term  $u^2 \partial_{xx} \overline u$. However, one may expect to achieve fractional error estimates with the rate of convergence depending  on the regularity of the solution. Indeed, with a refined global error analysis, fractional convergence of order $\tau^{s/2}$  can be established in $L^2$  for solutions in $H^s$ as long as   $s>1/2$. The latter restriction   $s>1/2$ is thereby crucial when estimating the nonlinear terms in the global error  by a Sobolev embedding theorem
\begin{equation}\label{sob}
\Vert  v w \Vert_{L^2 } \leq C \Vert v  \Vert_{L^2}\Vert w \Vert_{H^{r}},\qquad r>\tfrac12.
\end{equation}
The  nonlinear $L^2$ estimate \eqref{sob}  restricts  the class of possible solutions   to smooth Sobolev spaces $u(t) \in H^{s}$ with  $s>1/2$. Classical arguments based on  the nonlinear $L^2$ estimate \eqref{sob}   break down  for rough solutions.  For a long time it was therefore an open question whether  one can  actually establish global convergence (with arbitraryly small order of convergence $\tau^{\varepsilon(s)}$, $\varepsilon>0$) for splitting methods for NLS under rough data
\begin{equation}\label{rough}
u(0) \in H^{s}, \qquad 0< s \le 1/2.
\end{equation}
The aim of this paper lies in closing this gap.

In this paper we establish low regularity estimates for the filtered Lie splitting method
\begin{equation}\label{scheme}
\begin{aligned}
u^{n+1}  &  = \e^{i \tau \partial_x^2}  \Pi_{\taul}\left( \e^{i \mu\tau \vert \Pi_{\taul} u^n \vert^2} \Pi_{\taul} u^n \right),
\qquad u^0 & =\Pi_{\taul}  u(0)
\end{aligned}
\end{equation}
with the aid of discrete Bourgain spaces; cf. \cite{ORSBourg}.  This will allow us for the first time to deal with rough data \eqref{rough}.
Here, the projection operator $ \Pi_{\taul}$ for $\tau>0$ is defined by the Fourier multiplier
\beq
\label{PiKdef}
 \Pi_{\taul} =  \chi\left({ -i \partial_x \over\taul^{-\frac12} }\right) = \overline\Pi_{\taul},
\eeq
where $\chi$ is the characteristic function $\chi= \mathrm{1}_{[-1, 1]}.$ Note that the projection  $\Pi_{\tau}$
is then continuous on $L^p$ for $p \in (1, + \infty)$ with operator norm uniformly bounded for $\tau \in (0, 1]$.
Such a filtered splitting was 
\black
originally proposed for nonlinear Schr\"{o}dinger equations on the full space by Ignat \cite{Ignat11}; see also Ignat and Zuazua \cite{IZ09}.
 
For the filtered Lie splitting~\eqref{scheme} first order error estimates for $H^2(\mathbb{R}^d)$ solutions were  established in  \cite{Ignat11}  with the aid of discrete Strichartz type estimates  $L^2(\mathbb{R}^d)$  in  dimensions $d \leq 3$.   Splitting methods with  numerical filters have been also successfully introduced  in  \cite{Bao} for nonlinear Schr\"odinger equations  in the semiclassical regime  with attractive interaction to suppress numerically the modulation instability. 

The main result of the paper is to prove that the filtered Lie splitting \eqref{scheme} indeed allows approximations to the cubic  NLS  \eqref{nlsO} with rough initial data \eqref{rough}. More precisely, we show that

\begin{theorem}\label{maintheo}
For every $T >0$ and $u_{0} \in H^{s_{0}}$,  $s_{0}>0$, let $u \in \mathcal{C}([0, T], H^{s_{0}})$ be the exact solution of \eqref{nlsO} with initial datum $u_{0}$ and denote by $\un$ the sequence defined by the scheme~\eqref{scheme}. Then, for every $0<s_{1}<s_{0}$, $s_{1} \in [0, 2]$, we have  the following error estimate:
there exists $\tau_{0}>0$ and $C_{T}>0$ such that for every step size $\tau \in (0, \tau_{0}]$
\beq
\label{final1} \|\un- u(t_{n})\|_{L^2} \leq C_{T} \tau^{s_{1}\over 2}, \quad 0 \leq n\tau \leq T.
\eeq
\end{theorem}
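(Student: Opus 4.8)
The plan is to run a Lady Windermere / discrete stability argument, but carried out entirely in discrete Bourgain spaces $X^{s,b}_\tau$ adapted to the filtered Lie scheme, rather than in Sobolev spaces. The key point is that the restriction $s>1/2$ in the classical analysis comes from using the Sobolev embedding \eqref{sob} to control cubic nonlinear terms in $L^2$; replacing this with the multilinear estimates available in $X^{s,b}_\tau$ (the discrete analogues of the Bourgain trilinear estimates for the periodic or non-periodic cubic NLS, as developed in \cite{ORSBourg}) will allow us to reach all $s>0$. First I would fix the parameters: choose $b>1/2$ close to $1/2$, pick an intermediate regularity level, and without loss of generality assume $s_1<\min(s_0,2)$ is the regularity we want to reach; it suffices to prove the estimate for $s_1$ arbitrarily close to $\min(s_0,2)$ and interpolate downward.

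The key steps, in order, are as follows. (1) \emph{Local error analysis of one step.} Writing $\Phi_\tau$ for the one-step map of \eqref{scheme} and $\Phi^t$ for the exact flow, I would expand $\e^{i\mu\tau|\Pi_\tau u|^2}\Pi_\tau u = \Pi_\tau u + i\mu\tau|\Pi_\tau u|^2\Pi_\tau u + O(\tau^2\cdots)$ and compare with the Duhamel formula for the exact solution; after one step the leading discrepancy is the difference between the discrete-in-time ``exponential'' treatment of the nonlinearity and the exact time integral of $\e^{is\partial_x^2}(|u|^2u)$ against $\e^{-is\partial_x^2}$. Using the frequency truncation $\Pi_\tau$ (which caps frequencies at $\tau^{-1/2}$, so that $\tau\partial_x^2$ is $O(1)$ on the range of $\Pi_\tau$), I would bound this local error by $C\tau^{1+s_1/2}$ in $L^2$, and — crucially — by $C\tau$ in $X^{-s_1/2,b}_\tau$ or a similar negative-index discrete Bourgain norm, exploiting the smoothing/oscillation encoded in the Bourgain structure so that only $s_1$ (not $2$) derivatives of the solution are charged. (2) \emph{A priori bound on the numerical solution.} Using the discrete Bourgain trilinear estimate together with the $L^p$-boundedness of $\Pi_\tau$, I would show that the scheme propagates a bound $\|u^n\|_{X^{s_1,b}_\tau}\le C_T$ on $0\le n\tau\le T$, for $\tau$ small; this is the discrete analogue of the local well-posedness estimate and is what replaces the ``boundedness of the numerical solution in a higher Sobolev space'' used in the smooth theory. (3) \emph{Stability / error propagation.} Writing $e^n=u^n-\Pi_\tau u(t_n)$, I would derive the perturbed recursion $e^{n+1}=\e^{i\tau\partial_x^2}\Pi_\tau(\text{linearized nonlinear map})\,e^n + (\text{local error})$, estimate the difference of the two cubic nonlinear terms by a discrete Bourgain multilinear estimate that is Lipschitz in the arguments (using step (2) to bound the ``coefficient'' factors), and sum over $n\le T/\tau$ steps. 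The Bourgain norms are needed here because the naive $L^2$ Lipschitz estimate for $|u|^2u$ again forces $s>1/2$; the discrete $X^{s,b}_\tau$ estimate does not.

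The main obstacle I expect is step (3) combined with the bookkeeping between the continuous and discrete Bourgain norms: one must transfer the local error bound (naturally obtained from the continuous solution $u$ and continuous Bourgain estimates) into the discrete $X^{s,b}_\tau$ framework, and then run Gronwall-type summation while keeping track of two different norms (a strong $L^2$ bound of size $\tau^{1+s_1/2}$ per step, and a weaker negative-regularity bound of size $\tau$ per step that must be upgraded using the a priori bound from step (2)). Concretely, the delicate point is proving a \emph{discrete} trilinear estimate of the form $\|\Pi_\tau(vw\bar z)\|_{X^{s,b-1}_\tau}\lesssim \|v\|_{X^{s,b}_\tau}\|w\|_{X^{s,b}_\tau}\|z\|_{X^{s,b}_\tau}$ uniformly in $\tau\in(0,1]$, and its Lipschitz variant, since the discrete time variable only ranges over a torus of size $\tau^{-1}$ and the resonance structure of $\partial_t+\partial_x^2$ must be captured at the discrete level — this is exactly where the construction of \cite{ORSBourg} and a careful handling of the projector $\Pi_\tau$ enter, and where the argument genuinely differs from the smooth case. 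Once these discrete multilinear estimates and the continuous-to-discrete transfer are in place, the Lady Windermere summation gives $\|e^n\|_{L^2}\lesssim (T/\tau)\cdot\tau^{1+s_1/2}=C_T\tau^{s_1/2}$, and adding $\|\Pi_\tau u(t_n)-u(t_n)\|_{L^2}\lesssim \tau^{s_1/2}$ (from the frequency truncation, which costs $\tau^{s_0/2}\le\tau^{s_1/2}$) yields \eqref{final1}.
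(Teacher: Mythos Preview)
Your overall strategy --- replace Sobolev product estimates by discrete Bourgain multilinear estimates from \cite{ORSBourg} and run a stability-plus-consistency argument --- is exactly what the paper does. The key trilinear estimate you identify is precisely Lemma~\ref{prodd}, and your anticipated ``main obstacle'' (proving that estimate and transferring the continuous solution to the discrete Bourgain framework) is handled by Lemma~\ref{prodd} together with Proposition~\ref{propunifdisc}.

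There are, however, two substantive differences between your plan and the paper's argument, and one of them points to a genuine gap in your proposal.

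\textbf{(i) Comparison object.} You set $e^n = u^n - \Pi_\tau u(t_n)$. The paper instead introduces the solution $u_\tau$ of the \emph{projected} equation~\eqref{nlsP} and sets $e^n = u_\tau(t_n)-u^n$; the piece $\|u(t_n)-u_\tau(t_n)\|_{L^2}\lesssim\tau^{s_0/2}$ is handled separately by Proposition~\ref{propNLSK}. Since $u_\tau$ satisfies a genuine PDE with the filter already built in, the local error decomposition~\eqref{localErr} becomes clean; with $\Pi_\tau u$ this does not happen and you would pick up extra commutator-type terms. Related to this, the paper never proves an a~priori $X^{s_1,b}_\tau$ bound on the numerical solution (your step~(2)); it only needs such a bound on $u_\tau$ restricted to the grid, which is Proposition~\ref{propunifdisc}.

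\textbf{(ii) The accumulation mechanism.} This is the real gap. Your final step reads ``Lady Windermere summation gives $\|e^n\|_{L^2}\lesssim (T/\tau)\cdot\tau^{1+s_1/2}$''. For this to work you would need \emph{both} a per-step local error of size $\tau^{1+s_1/2}$ in $L^2$ \emph{and} an $L^2$-Lipschitz bound on $T/\tau$ compositions of the numerical map that is uniform in $\tau$. Neither holds for $s_1\le 1/2$: the cubic local error term can only be placed in $L^2$ at the cost of a Bernstein loss (since $u_\tau\notin L^\infty$), and the one-step Lipschitz constant in $L^2$ is $1+O(\tau\|\Pi_\tau u\|_{L^\infty}^2)=1+O(\tau^{1/2})$, which blows up after $T/\tau$ iterations. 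The paper avoids both issues by never working step-by-step: the local error is estimated directly as a \emph{sequence} in the weak norm $X^{0,-3/8}_\tau$ (not in $L^2$, and with negative~$b$, not negative~$s$ as you suggest), giving $\|\mathcal{E}_{\mathrm{loc}}(t_n,\tau,u_\tau)\|_{X^{0,-3/8}_\tau}\lesssim\tau^{1+s_1/2}$; then the discrete Duhamel estimate~\eqref{bourg4} absorbs the summation in one shot, yielding $\|\mathcal{R}_n\|_{X^{0,b}_\tau}\lesssim\tau^{s_1/2}$ (Proposition~\ref{globalerror}). The stability part is closed by the same mechanism plus the $T_1^{\varepsilon_0}$ smallness from~\eqref{bourg3}, i.e.\ a contraction on a short time window iterated finitely many times, not a Gronwall-type accumulation. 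In short: replace your per-step counting by the global $X^{s,b-1}_\tau\to X^{s,b}_\tau$ mapping property of the discrete Duhamel operator, and estimate the local error in $X^{0,b-1}_\tau$ rather than in $L^2$.
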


With the aid of  discrete Strichartz type estimates (on $\mathbb{R}$) and Bourgain type estimates (on $\mathbb{T}$)   low regularity estimates  could be recently established for resonance based filtered discretisations  for rough data \eqref{rough}; see \cite{ORSBourg,ORS19}. Compared to the filtered Lie splitting \eqref{scheme}  resonance based filtered schemes allow (thanks to their favorable  local error structure) improved convergence rates for solutions in $H^s$ with $s>1/4$. However, for very rough data, i.e.,  in regimes where $0 < s <1/4$, the convergence rate we show here for the filtered Lie splitting \eqref{scheme} coincides with the convergence rate established for the  resonance based  schemes. This is due to the fact that at this point of roughness the ``worst" regularity restrictions stem from the ``space discretization", which consists in projecting the equation
onto a finite number of Fourier modes,  and not from the local error of the time discretization.

Our error estimates are based on  discrete Bourgain type estimates developed in  \cite{ORSBourg}. It seems also possible to extend the analysis developed in this paper to higher dimensions and more general nonlinearities. The central task would lie in establishing the corresponding discrete counterpart of the continuous Bourgain estimates given in  \cite{Bour93a} in the various cases as done in  \cite{ORSBourg} for the periodic NLS.

Note that the filtered Lie splitting \eqref{scheme} can be seen as a classical Lie splitting discretisation of the projected equation
\begin{equation}
\begin{aligned}\label{nlsP}
i \partial_t  \uta & = - \partial_{x}^2 \uta - \mu  \Pi_{\taul}\left( \vert  \Pi_{\taul}   \uta \vert^2  \Pi_{\taul}  \uta\right), \qquad \uta(0) & =  \Pi_{\taul}  u(0).
\end{aligned}
\end{equation}
Hence, the idea is to first analyse the difference between the original NLS equation \eqref{nlsO} and its projected counterpart \eqref{nlsP} on the continuous level. This will then allow us to analyse  the time discretisation error introduced by the Lie splitting discretisation~\eqref{scheme} applied to the projected equation~\eqref{nlsP}.

\subsection*{Outline of the paper.} In a first step we study the difference between the original NLS equation~\eqref{nlsO} and its projected counterpart \eqref{nlsP}; see Section \ref{sec:errPro}. This will give a bound on
$$
\Vert u(t) -  \uta(t) \Vert_{L^2}.
$$
Then we will analyse the time discretisation error introduced by the Lie splitting discretisation~\eqref{scheme} applied to the projected equation~\eqref{nlsP}; see Section \ref{sec:errSplit}. This will yield a bound on
$$
\Vert \uta(t_n) - u^n\Vert_{L^2}.
$$
Combining the above two estimates eventually allows us to establish the desired global error estimate on $\Vert u(t_n) - u^n\Vert_{L^2}$
since
$$
\Vert u(t_n) - u^n\Vert_{L^2} \leq \Vert u(t_n) -  \uta(t_n) \Vert_{L^2} + \Vert \uta(t_n) - u^n\Vert_{L^2}.
$$

We carry out the error analysis on the torus $\mathbb{T}$; however, all estimates also hold for the full space case $\mathbb{R}$.
\subsection*{Notations}

We close this section with some notation that will be used throughout the paper.
For two expressions $a$ and $b$, we write $a \lesssim b$ whenever $a \leq C b$ holds with some constant $C>0$, uniformly in $\tau \in (0, 1]$.  We further write $a\sim b$ if $b\lesssim a \lesssim b$. When we want to emphasize that $C$ depends on an additional parameter $\gamma$, we write $a \lesssim_{\gamma} b$.

Further, we denote $\langle \,\cdot\, \rangle = ( 1 + | \cdot |^2)^{1 \over 2}$ and for sequences $(u_{n})_{n \in \mathbb{Z}} \in 
 X^\mathbb{Z}$ in a Banach space $X$ with norm $\|\cdot \|_{X}$, we use the notation
 $$ \|u_{n}\|_{l^p_{\tau}X}=\left( \tau \sum_{n} \|u_{n}\|_{X}^p \right)^{1 \over p}, \quad
  \|u_{n}\|_{l^\infty_{\tau}X} = \sup_{n \in \mathbb{Z}} \|u_{n}\|_{X}.$$

\section{Error between the  exact and the projected equation}\label{sec:errPro}
In this section we estimate  the difference between the solutions of the original NLS equation  \eqref{nlsO} and its projected counterpart \eqref{nlsP}.
Such an estimate was already established in \cite{ORSBourg}.

Let us recall the definition of Bourgain spaces. A tempered distribution $u(t,x)$ on $\mathbb{R} \times \mathbb{T}$ belongs to the Bourgain space $X^{s, b}$ if its following norm is finite
\begin{equation*}
\|u\|_{X^{s, b}}=\left(\int_{\mathbb{R}}\sum_{k \in \mathbb{Z}}\left(1+ |k|\right)^{2s}\left(1+| \sigma+ k^2|\right)^{2b}|\widetilde{u}\left(\sigma, k\right)|^2 \dd\sigma\right)^{\frac{1}{2}},
\end{equation*}
where $\widetilde{u}$ is the space-time Fourier transform of $u$:
$$
\widetilde{u}(\sigma, k)= \int_{\mathbb{R}\times \mathbb{T}} \e^{-i \sigma t - i k x} u(t,x) \, \dd t \dd x.
$$
We shall also use a localized version of this space.  For $I \subset \mathbb{R}$ being an open interval, we say that $u \in X^{s,b}(I)$ \black if $\|u\|_{X^{s,b}(I)}< \infty$, where
$$
\|u\|_{X^{s,b}(I)} = \inf\{\|\overline{u} \|_{X^{s,b}}, \, \overline{u}|_{I} = u  \}.
$$
When $I= (0, T)$ we will often simply use the notation $X^{s,b}(T)$. We refer for example to \cite{ORSBourg} Lem\-ma~2.1 for some useful properties of these spaces in this setting
(and to \cite{Bour93a} and \cite{Tao06} for more details).
A  particularly useful property we will exploit in the following is the embedding $X^{s, b} \subset \mathcal{C}(\mathbb{R}, H^s)$ for $b>1/2$.
Let us first recall the following classical well-posedness result for \eqref{nlsO}.
\begin{theorem}
\label{theoNLS}
For every $T>0$ and $u_{0}\in L^2$, there exists a unique solution $u$ of \eqref{nlsO} such that $u\in \mathcal{C}([0, T], L^2) \cap X^{0, {b}}(T)$ for  any  $b \in (1/2, 5/8)$. Moreover, if $u_{0} \in H^{s_{0}}$, $s_{0}> 0$, then $ u\in \mathcal{C}([0, T], H^{s_{0}}) \cap X^{s_{0}, b}(T)$.
\end{theorem}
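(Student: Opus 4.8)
The plan is to recast \eqref{nlsO} in Duhamel form and run a fixed-point argument in the Bourgain space $X^{0,b}(T)$, following Bourgain~\cite{Bour93a}. Fix a smooth even cutoff $\psi$ with $\psi \equiv 1$ on $[-1,1]$ and $\operatorname{supp}\psi \subset [-2,2]$, set $\psi_T(t)=\psi(t/T)$, and consider the map
\[
\Phi(u)(t) = \psi(t)\,\e^{it\partial_x^2}u_0 + i\mu\,\psi_T(t)\int_0^t \e^{i(t-s)\partial_x^2}\bigl(|u|^2u\bigr)(s)\,\dd s .
\]
The two linear ingredients are the standard Bourgain estimates: $\|\psi(t)\e^{it\partial_x^2}u_0\|_{X^{s,b}} \lesssim \|u_0\|_{H^s}$ for any $b$, and, for $0 \le b' < 1/2 < b$ with $b+b'\le 1$ and $T\le 1$,
\[
\Bigl\|\psi_T(t)\int_0^t \e^{i(t-s)\partial_x^2}F(s)\,\dd s\Bigr\|_{X^{s,b}} \lesssim T^{\,1-b-b'}\,\|F\|_{X^{s,-b'}},
\]
the factor $T^{1-b-b'}$ providing the small parameter needed to close the contraction.

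The nonlinear heart of the matter is the trilinear estimate
\[
\bigl\||u|^2u\bigr\|_{X^{s,-b'}} \lesssim \|u\|_{X^{s,b}}\,\|u\|_{X^{0,b}}^2 , \qquad s\ge 0 ,
\]
valid for $b>3/8$. This is a consequence of Bourgain's periodic $L^4$ Strichartz inequality $\|v\|_{L^4_{t,x}(\mathbb{R}\times\mathbb{T})} \lesssim \|v\|_{X^{0,3/8+}}$ (itself resting on a lattice-point/divisor count on $\mathbb{T}$): by duality one pairs $|u|^2u$ against $w\in X^{0,b'}$, distributes the $s$-derivative onto a single factor using $(1+|k|)^s \lesssim \sum_j (1+|k_j|)^s$ on the frequency relation $k=k_1-k_2+k_3$, and applies Hölder in $L^4_{t,x}$ to the four resulting factors. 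This is exactly where the admissible window for $b$ is pinned down: one needs $b>1/2$ both for the contraction and for the embedding $X^{s,b}\hookrightarrow \mathcal{C}(\mathbb{R},H^s)$; one needs $b'>3/8$ so that the dual function lies in $L^4$; and one needs $b+b'\le 1$ for the inhomogeneous estimate. These are simultaneously satisfiable precisely when $b\in(1/2,5/8)$.

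With these estimates in hand, $\Phi$ maps a ball of $X^{0,b}(T)$ (resp.\ $X^{s_0,b}(T)$) of radius $\sim \|u_0\|_{L^2}$ (resp.\ $\sim\|u_0\|_{H^{s_0}}$) into itself and is a contraction there once $T$ is small depending only on that norm; this gives a local solution, and uniqueness in the full class $X^{0,b}(T)$ follows from the same difference estimate on short subintervals. To globalize I would invoke conservation of mass $\|u(t)\|_{L^2}=\|u_0\|_{L^2}$ — first for smooth data, then propagated by continuity of the flow in $X^{0,b}$ — so that the local existence time depends only on $\|u_0\|_{L^2}$, which never grows, and the solution extends to every $[0,T]$. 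For the persistence of $H^{s_0}$ regularity I would rerun the fixed point in $X^{s_0,b}$ on each of the $\sim T/T_\ast$ subintervals of the (now fixed) local length $T_\ast$: since the trilinear estimate charges the $s_0$-derivative to only one factor while the other two are controlled by the conserved $L^2$ norm, $\|u\|_{X^{s_0,b}}$ grows at most geometrically from one subinterval to the next and thus stays bounded on $[0,T]$, yielding $u\in\mathcal{C}([0,T],H^{s_0})\cap X^{s_0,b}(T)$. The main obstacle is the trilinear estimate, i.e.\ Bourgain's $L^4$ Strichartz inequality on $\mathbb{T}$; the remaining steps are routine Bourgain-space bookkeeping. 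As the statement is classical, in the paper it may simply be quoted from \cite{Bour93a}.
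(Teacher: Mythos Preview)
Your outline is correct and is exactly the standard Bourgain fixed-point argument that underlies this classical result. The paper itself does not give a proof: it simply states ``We refer again to \cite{ORSBourg} for a sketch of the proof,'' so your closing remark was on target (the reference is \cite{ORSBourg} rather than \cite{Bour93a}, but the method is Bourgain's).
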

We refer again to \cite{ORSBourg} for a sketch of the proof.

In a similar way, we get for the solution of \eqref{nlsP} the following estimate.
\begin{proposition}
\label{propNLSK}
For  $u_{0} \in H^{s_{0}}$,  $s_{0} \geq 0$, and $\tau \in (0, 1]$, there exists a unique solution $u_{\tau}$ of \eqref{nlsP} such that $u_{\tau} \in X^{s_{0}, b}(T)$ for $b\in(1/2,5/8)$ and every $T > 0$. Moreover, for every $T>0$, there exists $M_{T}$ such that for every $\tau \in (0, 1]$, we have the estimate
$$
\| u_{\tau} \|_{X^{s_{0}, b}(T)} \leq M_{T}.
$$
Furthermore, it holds that uniformly for $\tau \in (0, 1]$, we have for some $C_{T}>0$,
$$
\| u - u_{\tau} \|_{L^\infty((0,T); L^2)} \lesssim \| u - u_{\tau} \|_{X^{0, b}(T)} \leq  C_{T}  \tau^{s_{0} \over 2}.
$$
\end{proposition}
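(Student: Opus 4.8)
The plan is to follow the strategy of \cite{ORSBourg} and treat the two assertions in turn: first the well-posedness of the projected equation \eqref{nlsP}, with bounds uniform in $\tau$, and then a perturbative comparison of $u_{\tau}$ with the exact solution $u$ in $X^{0,b}$. For the well-posedness I would run the standard contraction mapping argument for cubic NLS in $X^{s_{0},b}$ with $b\in(1/2,5/8)$. The crucial structural point is that $\Pi_{\tau}$ is a Fourier multiplier in $x$ with symbol bounded by $1$, hence a bounded operator of norm $\le 1$ on every $X^{s,b}$, \emph{uniformly} in $\tau$. Consequently the trilinear estimate for the projected nonlinearity,
\[
\bigl\|\Pi_{\tau}(|\Pi_{\tau}v|^{2}\Pi_{\tau}v)\bigr\|_{X^{s_{0},b-1}}\lesssim \|v\|_{X^{s_{0},b}}^{3},
\]
follows from the classical periodic cubic ($L^{4}$ Strichartz/Bourgain) estimate with a constant that does not depend on $\tau$, so Picard iteration produces a solution on an interval $[0,\delta]$ whose length $\delta$ depends only on $\|u_{0}\|_{H^{s_{0}}}$. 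Since $\Pi_{\tau}=\Pi_{\tau}^{*}$, testing \eqref{nlsP} against $u_{\tau}$ gives $\frac{\dd}{\dd t}\|u_{\tau}(t)\|_{L^{2}}^{2}=0$, so the solution is global in $L^{2}$; propagating the $X^{s_{0},b}$ bound across the $\sim T/\delta$ consecutive short intervals covering $[0,T]$ then yields $\|u_{\tau}\|_{X^{s_{0},b}(T)}\le M_{T}$ with $M_{T}$ independent of $\tau$.

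For the comparison estimate, set $w:=u-u_{\tau}$. Subtracting \eqref{nlsP} from \eqref{nlsO} gives
\[
i\partial_{t}w=-\partial_{x}^{2}w-\mu\bigl[\,|u|^{2}u-\Pi_{\tau}(|\Pi_{\tau}u_{\tau}|^{2}\Pi_{\tau}u_{\tau})\,\bigr],\qquad w(0)=(I-\Pi_{\tau})u_{0},
\]
and I would decompose the bracket as $\mathcal{E}_{\tau}(u)+\mathcal{D}_{\tau}(u,u_{\tau})$ with
\[
\mathcal{E}_{\tau}(u)=|u|^{2}u-\Pi_{\tau}(|\Pi_{\tau}u|^{2}\Pi_{\tau}u),\qquad
\mathcal{D}_{\tau}(u,u_{\tau})=\Pi_{\tau}\bigl(|\Pi_{\tau}u|^{2}\Pi_{\tau}u-|\Pi_{\tau}u_{\tau}|^{2}\Pi_{\tau}u_{\tau}\bigr).
\]
The ``consistency'' term $\mathcal{E}_{\tau}(u)$, after writing $u=\Pi_{\tau}u+(I-\Pi_{\tau})u$ and expanding, is a finite sum of trilinear terms each carrying one factor of the form $(I-\Pi_{\tau})(\cdot)$; since $I-\Pi_{\tau}$ is the Fourier projection onto $\{|k|\ge\tau^{-1/2}\}$ one has $\|(I-\Pi_{\tau})f\|_{X^{0,b}}\lesssim\tau^{s_{0}/2}\|f\|_{X^{s_{0},b}}$, so the same trilinear estimate yields $\|\mathcal{E}_{\tau}(u)\|_{X^{0,b-1}}\lesssim\tau^{s_{0}/2}\|u\|_{X^{s_{0},b}}^{3}$. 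The ``difference'' term $\mathcal{D}_{\tau}$ has a telescoping structure in which $\Pi_{\tau}w$ appears in exactly one factor, so the trilinear estimate gives $\|\mathcal{D}_{\tau}(u,u_{\tau})\|_{X^{0,b-1}}\lesssim\bigl(\|u\|_{X^{0,b}}^{2}+\|u_{\tau}\|_{X^{0,b}}^{2}\bigr)\|w\|_{X^{0,b}}$. Finally $\|w(0)\|_{L^{2}}\lesssim\tau^{s_{0}/2}\|u_{0}\|_{H^{s_{0}}}$ by the same property of $I-\Pi_{\tau}$.

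To close, I would use that on a short interval $[a,a+\delta]$ the Duhamel representation in $X^{0,b}$ gains a positive power $\delta^{\theta}$ in front of the nonlinear terms (a standard property of these spaces, see \cite{ORSBourg}). Combined with the two estimates above, the bound $M_{T}$, and $\|u\|_{X^{s_{0},b}(T)}\lesssim_{T}1$ from Theorem~\ref{theoNLS}, this gives
\[
\|w\|_{X^{0,b}(a,a+\delta)}\le C_{T}\bigl(\|w(a)\|_{L^{2}}+\tau^{s_{0}/2}\bigr)+C_{T}\delta^{\theta}\|w\|_{X^{0,b}(a,a+\delta)},
\]
and choosing $\delta$ small, independently of $\tau$, absorbs the last term. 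Since $X^{0,b}\subset\mathcal{C}(\mathbb{R};L^{2})$ controls $\|w(a+\delta)\|_{L^{2}}$ by $\|w\|_{X^{0,b}(a,a+\delta)}$, iterating over the $\sim T/\delta$ intervals covering $[0,T]$ makes the constant grow geometrically in the number of steps and yields $\|u-u_{\tau}\|_{X^{0,b}(T)}\le C_{T}\tau^{s_{0}/2}$; the $L^{\infty}((0,T);L^{2})$ bound then follows from the same embedding. I expect the main obstacle to be purely organizational: keeping every constant --- above all the local time-step $\delta$, and hence the number of iterations $\sim T/\delta$ --- uniform in $\tau$, which is precisely what the uniform boundedness of $\Pi_{\tau}$ recorded after \eqref{PiKdef} guarantees; the trilinear estimates and the $\delta^{\theta}$-gain themselves are the classical periodic cubic-NLS facts collected in \cite{ORSBourg}.
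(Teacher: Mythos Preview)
Your sketch is correct and matches the argument the paper invokes: the paper itself does not prove Proposition~\ref{propNLSK} but cites \cite{ORSBourg}, Proposition~2.4 and Corollary~2.6 (with $K=\tau^{-1/2}$), and the strategy you outline---contraction in $X^{s_{0},b}$ using the uniform boundedness of $\Pi_{\tau}$, $L^{2}$ conservation for globality, and the perturbative decomposition $\mathcal{E}_{\tau}+\mathcal{D}_{\tau}$ for the comparison---is exactly that proof. The only point worth making explicit is that in the expansion of $\mathcal{E}_{\tau}(u)$ one also gets the term $(I-\Pi_{\tau})(|\Pi_{\tau}u|^{2}\Pi_{\tau}u)$, where $I-\Pi_{\tau}$ acts on the full product rather than on a single factor; this is still $O(\tau^{s_{0}/2})$ because the trilinear estimate holds in $X^{s_{0},b-1}$ as well, so no gap arises.
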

This statement is proven in \cite{ORSBourg}. It is a special case of  Proposition 2.4 and Corollary 2.6 in that article with $K= \tau^{- {1 \over 2}}$.

Moreover, if $s_{0}>0$, we have the following additional property:
\begin{cor}\label{corsb}
Let  $b \in (5/8, 1]$ and assume that  $s_{0}>0$, then for every    $0 \leq s_1 <s_{0},$ we also have that  for every $T > 0$ and uniformly in $\tau$,
$$
\|u_{\tau} \|_{X^{s_1,b}(T)} \leq M_{T}.
$$
\end{cor}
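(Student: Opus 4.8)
The plan is to derive the corollary from the Duhamel formulation of the projected equation~\eqref{nlsP}, upgrading the temporal index of the Bourgain norm from some $b_{0}\in(1/2,5/8)$ to $b=1$, at the (small) price of lowering the spatial index from $s_{0}$ to any $s_{1}<s_{0}$. First I would note that it suffices to establish the bound for $b=1$: since $\langle\sigma+k^{2}\rangle^{b}\le\langle\sigma+k^{2}\rangle$ for $b\le 1$, one has $\|u_{\tau}\|_{X^{s_{1},b}(T)}\le\|u_{\tau}\|_{X^{s_{1},1}(T)}$ for every $b\in(5/8,1]$ (pass to a near-optimal extension of $u_{\tau}$ and take the infimum). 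Fix then $b_{0}=9/16\in(1/2,5/8)$; by Proposition~\ref{propNLSK} applied with regularity index $s_{0}$ we have $\|u_{\tau}\|_{X^{s_{0},b_{0}}(T)}\le M_{T}$ uniformly in $\tau\in(0,1]$.

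Next I would write, using Duhamel's formula for~\eqref{nlsP},
\begin{equation*}
u_{\tau}(t)=\mathrm{e}^{it\partial_{x}^{2}}\Pi_{\tau}u_{0}+i\mu\int_{0}^{t}\mathrm{e}^{i(t-\sigma)\partial_{x}^{2}}\,\Pi_{\tau}\big(|\Pi_{\tau}u_{\tau}|^{2}\Pi_{\tau}u_{\tau}\big)(\sigma)\,\mathrm{d}\sigma .
\end{equation*}
For the first term, the homogeneous linear estimate in Bourgain spaces (which holds with an arbitrary temporal exponent, the time cutoff being smooth) together with the $L^{2}$-boundedness of $\Pi_{\tau}$ and $s_{1}<s_{0}$ gives $\|\mathrm{e}^{it\partial_{x}^{2}}\Pi_{\tau}u_{0}\|_{X^{s_{1},1}(T)}\lesssim_{T}\|\Pi_{\tau}u_{0}\|_{H^{s_{1}}}\le\|u_{0}\|_{H^{s_{0}}}$. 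For the Duhamel term, the standard inhomogeneous linear estimate for Bourgain spaces (see e.g.\ \cite{Tao06}), valid for $1/2<b\le1$, reduces matters to controlling the nonlinearity in $X^{s_{1},0}(T)=L^{2}\big((0,T);H^{s_{1}}\big)$, that is, to a trilinear estimate of the form
\begin{equation*}
\big\|\Pi_{\tau}\big(|\Pi_{\tau}u_{\tau}|^{2}\Pi_{\tau}u_{\tau}\big)\big\|_{L^{2}((0,T);H^{s_{1}})}\lesssim_{T}\|u_{\tau}\|_{X^{s_{0},b_{0}}(T)}^{3}\le M_{T}^{3}.
\end{equation*}

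To prove this trilinear estimate I would move the $s_{1}$ spatial derivatives onto the factor of highest frequency (using $\langle k\rangle^{s_{1}}\lesssim\langle k_{1}\rangle^{s_{1}}+\langle k_{2}\rangle^{s_{1}}+\langle k_{3}\rangle^{s_{1}}$ for $k=k_{1}-k_{2}+k_{3}$ and $s_{1}\ge 0$), drop the harmless projections $\Pi_{\tau}$ (bounded on the relevant spaces), and then apply Hölder's inequality in the form $\|fgh\|_{L^{2}_{t,x}}\le\|f\|_{L^{6}_{t,x}}\|g\|_{L^{6}_{t,x}}\|h\|_{L^{6}_{t,x}}$. Bourgain's periodic $L^{6}_{t,x}$ Strichartz estimate combined with the transfer principle yields, for any $\varepsilon>0$ and any exponent $b_{0}>1/2$, the bound $\|w\|_{L^{6}((0,T)\times\mathbb{T})}\lesssim_{T}\|w\|_{X^{\varepsilon,b_{0}}(T)}$; choosing $\varepsilon=(s_{0}-s_{1})/2>0$, so that $s_{1}+\varepsilon<s_{0}$ and $\varepsilon<s_{0}$, each of the three factors is controlled by $\|u_{\tau}\|_{X^{s_{0},b_{0}}(T)}$, which proves the estimate. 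Combining this with the linear part gives $\|u_{\tau}\|_{X^{s_{1},1}(T)}\le\|u_{0}\|_{H^{s_{0}}}+C_{T}M_{T}^{3}$ uniformly in $\tau$, hence the claim after renaming the constant.

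The main obstacle is the trilinear estimate, and the essential reason the hypothesis $s_{0}>0$ cannot be dropped: the periodic $L^{6}_{t,x}$ Strichartz inequality carries an unavoidable $\varepsilon$-loss of derivatives, so one needs the strictly positive surplus $s_{0}-s_{1}>0$ to absorb it while keeping all constants uniform in $\tau$ — in particular, the projection $\Pi_{\tau}$ must never be used to manufacture regularity, which would cost a non-uniform power of $\tau^{-1/2}$. A minor technical point is to make sure the inhomogeneous Bourgain estimate is available up to the endpoint $b=1$; this is standard, and in any case one may first prove the bound for every $b<1$ and then recover $b=1$ by running the same Duhamel argument directly in $X^{s_{1},1}(T)$.
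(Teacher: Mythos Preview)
Your argument is correct and is essentially the standard route to this kind of statement: bootstrap the temporal exponent via Duhamel, control the linear part directly, and reduce the inhomogeneous part to a trilinear bound in $L^{2}_{t}H^{s_{1}}_{x}$ that is handled by distributing the $s_{1}$ derivatives (Kato--Ponce), H\"older with exponents $6\!\times\!6\!\times\!6$, and Bourgain's periodic $L^{6}$ Strichartz estimate with an $\varepsilon$-loss absorbed by the gap $s_{0}-s_{1}>0$. The paper itself does not give a proof here but simply cites \cite{ORSBourg}, Corollary~2.8; your reconstruction matches what that reference does in spirit.

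Two small remarks on presentation rather than substance. First, the passage ``$\langle k\rangle^{s_{1}}\lesssim\langle k_{1}\rangle^{s_{1}}+\langle k_{2}\rangle^{s_{1}}+\langle k_{3}\rangle^{s_{1}}$ and then H\"older'' is morally right but, taken literally on the Fourier side, does not immediately return a product of $L^{6}$ norms of the original functions (absolute values of Fourier coefficients intervene); the clean way is to invoke the fractional Leibniz (Kato--Ponce) inequality on $\mathbb{T}$ directly, which gives $\|\langle D\rangle^{s_{1}}(fgh)\|_{L^{2}}\lesssim\|\langle D\rangle^{s_{1}}f\|_{L^{6}}\|g\|_{L^{6}}\|h\|_{L^{6}}+\text{permutations}$. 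Second, the endpoint $b=1$ of the inhomogeneous linear estimate is indeed valid (one checks it by hand: after conjugating by the free flow, the $X^{s_{1},1}$ norm becomes the $H^{1}_{t}H^{s_{1}}_{x}$ norm of a localized primitive of an $L^{2}_{t}H^{s_{1}}_{x}$ function), so your reduction to $b=1$ is legitimate.
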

This is proven in \cite{ORSBourg} Corollary 2.8.

\section{Error representation of the time discretisation of the projected equation}\label{sec:errSplit}

In this section we derive an estimate on the time discretisation error introduced by the Lie splitting discretisation~\eqref{scheme} applied to the  projected equation~\eqref{nlsP}. This will give an estimate on
$$
\Vert \uta(t_n) - u^n\Vert_{L^2}.
$$
First we rewrite the filtered splitting \eqref{scheme} with the aid of the telescopic identity as follows
\begin{equation}\label{schemT}
\begin{aligned}
u^n =  \e^{i n  \tau \partial_x^2} \Pi_{\taul}   u(0)
+i\mu  \tau \sum_{k=0}^{n-1}   \e^{i (n-k)  \tau \partial_x^2} \Pi_{\taul}  \frac{\e^{i \mu \tau \vert \Pi_{\taul} u^k\vert^2}-1}{i \mu  \tau} \Pi_{\taul} u^k
\end{aligned}
\end{equation}
(see also \cite{Ignat11}). Next we compare the scheme \eqref{schemT} with the mild solution of \eqref{nlsP}.
Note that the mild solution of  \eqref{nlsP} reads with the notation $t_n = n \tau$
\begin{equation}
\begin{aligned}\label{mildiV}
 \vta(t_n) & = \e^{i n  \tau \partial_x^2} \Pi_{\taul}   u(0)
 +i \mu \int_0^{t_n} \e^{i   (t_n-s) \partial_x^2}  \Pi_{\taul}
\left(\vert \Pi_{\taul} \vta(s) \vert^2  \Pi_{\taul} \vta(s)\right) \dd s\\
&=  \e^{i n  \tau \partial_x^2} \Pi_{\taul}   u(0)
 +i \mu \sum_{k=0}^{n-1} \int_{t_k}^{t_{k+1}} \e^{i   (t_n -s) \partial_x^2}  \Pi_{\taul}
\left(\vert \Pi_{\taul} \vta(s) \vert^2  \Pi_{\taul} \vta(s)\right)   \dd s\\
 &=  \e^{i n  \tau \partial_x^2} \Pi_{\taul}   u(0)
 +i \mu \sum_{k=0}^{n-1} \int_{0}^{\tau } \e^{i   (t_n-t_k -s)  \partial_x^2}  \Pi_{\taul}
\left(\vert \Pi_{\taul} \vta(t_k+s) \vert^2  \Pi_{\taul} \vta(t_k+s)\right)  \dd s.
\end{aligned}
\end{equation}
Thus, by taking the difference of \eqref{mildiV} and \eqref{schemT} we obtain that
\begin{equation}\label{global}
\vta(t_n) - u^n  
= i \mu \tau \sum_{k=0}^{n-1}  \e^{i (n-k)  \tau \partial_x^2}
\left( \Phi_{\mathcal{N}}^\tau(\vta(t_k)) - \Phi_{\mathcal{N}}^\tau(u^k) \right) + i \mu  \sum_{k=0}^{n-1}  \e^{i (n-k)  \tau \partial_x^2} \mathcal{E}_{\text{loc}}(t_k,\tau,\vta)
\end{equation}
with the nonlinear flow
\begin{equation}\label{numFlow}
\Phi_{\mathcal{N}}^\tau(w) = \Pi_{\taul} \left( \frac{\e^{i\mu \tau \vert \Pi_{\taul} w \vert^2}-1}{i \mu \tau} \Pi_{\taul} w\right)
\end{equation}
and the local error
\begin{equation}\label{localErr}
\begin{aligned}
\mathcal{E}_{\text{loc}}(t_k,&\tau,\vta)
\\& =
\Pi_{\taul} \int_{0}^{\tau } \e^{- i  s   \partial_x^2}
\left(\vert \Pi_{\taul} \vta(t_k+s) \vert^2  \Pi_{\taul} \vta(t_k+s)\right)
 -   \frac{\e^{i \mu\tau \vert \Pi_{\taul} \vta(t_k) \vert^2}-1}{i \mu\tau} \Pi_{\taul} \vta(t_k)\, \dd s
 \\& =
\Pi_{\taul} \int_{0}^{\tau }\left( \e^{- i  s   \partial_x^2} -1\right)
 \left(\vert \Pi_{\taul} \vta(t_k+s) \vert^2  \Pi_{\taul} \vta(t_k+s)\right)   \dd s
 \\&\ \quad +
\Pi_{\taul} \int_{0}^{\tau }
 \vert \Pi_{\taul} \vta(t_k+s) \vert^2   \Pi_{\taul} \Big( \vta(t_k+s)
 -    \vta(t_k) \Big)\dd s
 \\&\ \quad +
\Pi_{\taul} \int_{0}^{\tau }
\left( \vert \Pi_{\taul} \vta(t_k+s) \vert^2  -\frac{\e^{i\mu \tau \vert \Pi_{\taul} \vta(t_k) \vert^2}-1}{i \mu\tau}   \right) \Pi_{\taul} \vta(t_k) \,\dd s\\
&
= \mathcal{E}_{1}(t_{k}) + \mathcal{E}_{2}(t_{k}) + \mathcal{E}_{3}(t_{k}),
\end{aligned}
\end{equation}
where by \eqref{mildiV}
\begin{equation}\label{vd}
\begin{aligned}
\vta(t_k+s) &-    \vta(t_k) \\
&=\left( \e^{i s \partial_x^2} -1\right) \vta(t_k) +i  \mu
\int_0^s  \e^{i  (s-\xi)  \partial_x^2}  \Pi_{\taul}\left( \vert \Pi_{\taul} \vta(t_k+\xi ) \vert^2  \Pi_{\taul} \vta(t_k+\xi ) \right) \dd\xi.
\end{aligned}
\end{equation}

\section{Discrete Bourgain spaces}

Before performing local error and stability estimates for rough data, we shall recall the main properties of the discrete
Bourgain spaces introduced in \cite{ORSBourg}.

For sequences of functions $(u_{n}(x))_{n \in \mathbb{Z}},$ we define the Fourier transform $\widetilde{u_{n}}(\sigma, k)$ by
$$
\mathcal F_{\tau,x}(u_n)(\sigma,k) =\widetilde{u_{n}} (\sigma, k)= \tau \sum_{m \in \mathbb{Z}} \widehat{u_{m}}(k) \,\e^{i m \tau \sigma}, \quad \widehat{u_{m}}(k)= {1 \over 2\pi} \int_{-\pi}^\pi u_{m}(x) \,\e^{-i k x}\dd x.
$$
Parseval's identity then reads
\begin{equation}\label{parseval}
\| \widetilde{u_{n}}\|_{L^2l^2}= \|u_{n}\|_{l^2_{\tau}L^2},
\end{equation}
where
$$
\| \widetilde{u_{n}}\|_{L^2l^2}^2 = \int_{-{\pi \over \tau}}^{\pi\over \tau} \sum_{k \in \mathbb{Z}}
|\widetilde{u_{n}}(\sigma, k)|^2 \dd \sigma, \quad
\|u_{n}\|_{l^2_{\tau}L^2}^2 = \tau \sum_{m \in \mathbb{Z}} \int_{-\pi}^\pi  |u_{m}(x)|^2 \dd x.
$$
We  define the discrete Bourgain spaces $X^{s,b}_\tau$ for $s\ge 0$, $b\in\mathbb R$, $\tau>0$ by
\begin{equation}\label{norm2}
\| u_n \|_{X^{s,b}_{\tau}} = \left\| \langle k \rangle^s \langle  d_{\tau}(\sigma -k^2)  \rangle^b \widetilde{u_n}(\sigma, k)  \right\|_{L^2l^2},
\end{equation}
where  $d_{\tau}(\sigma)=\frac{\e^{i \tau \sigma} - 1}\tau$.
Note that $d_{\tau}$ is $2\pi/\tau$ periodic and that uniformly in $\tau$, we have $|d_{\tau}(\sigma)| \sim | \sigma |$ for $|\tau \sigma | \leq \pi$.

For  $s \in \mathbb{R}$ and $b> 1/2$, we have that $X^{s,b}_{\tau} \subset l^\infty_{\tau}H^s$:
\begin{equation}\label{sobbourg}
\|u_{n}\|_{l^\infty_{\tau}H^s } \lesssim_{b} \| u_{n}\|_{X^{s,b}_{\tau}}.
\end{equation}
From the properties of the $d_{\tau}$ function, we also have that
\begin{equation}\label{bshift}
\sup_{\delta \in [-4, 4]} \| \e^{i n\tau \delta \partial_{x}^2} u_{n}\|_{X^{s, b}_{\tau}} \lesssim_{b} \|u_{n}\|_{X^{s,b}_{\tau}},
\end{equation}
\begin{equation}\label{shiftt}
\sup_{\delta \in [-4, 4]} \|\e^{i n \tau \delta } u_{n}\|_{X^{s, b}_{\tau}} \lesssim_{b} \|u_{n}\|_{X^{s, b}_{\tau}}
\end{equation}
 and that the discrete spaces satisfy the embeddings
\begin{equation}\label{embdisc1}
\|u_{n}\|_{X^{0, b}_{\tau}} \lesssim { 1 \over  \tau^{b-b'}} \|u_{n}\|_{X^{0, b'}_\tau}, \quad b \geq b'.
\end{equation}
Some useful more technical properties are gathered in the following lemma; see~\cite[Lemma 3.4]{ORS19}.
\begin{lemma}\label{bourgainfaciled}
For $\eta \in \mathcal{C}^\infty_{c}(\mathbb{R})$ and $\tau\in(0,1]$, we have that
\begin{align}
\label{bourg1} &\| \eta(n \tau)  \e^{in \tau \partial_{x}^2} f\|_{X^{s,b}_{\tau}} \lesssim_{\eta, b} \|f\|_{H^s}, \quad s \in \mathbb{R}, \, b \in \mathbb{R}, \, f \in H^s, \\
\label{bourg2} &\| \eta(n \tau)  u_{n}\|_{X^{s,b}_{\tau}} \lesssim_{\eta, b} \|u_{n}\|_{X^{s,b}_{\tau}}, \quad s \in \mathbb{R}, \, b \in \mathbb{R} , \, u_{n} \in X^{s,b}_{\tau},\\
\label{bourg3} &\left\| \eta\left(\frac{n\tau}T \right) u_{n} \right\|_{X^{s,b'}_{\tau}} \lesssim_{\eta, b, b'} T^{b-b'} \|u_{n}\|_{X^{s,b}_{\tau}}, \quad s \in \mathbb{R},  -{1 \over 2} <b' \leq b <{ 1 \over 2},\, 0< T = N \tau  \leq 1, \, N \geq 1.
\end{align}
In addition, for
$$
U_{n}(x)= \eta(n \tau) \tau \sum_{m=0}^n  \e^{i ( n-m ) \tau \partial_{x}^2}  u_{m}(x),
$$
we have
\begin{equation}
\label{bourg4}\|U_{n}\|_{X^{s,b}_{\tau}} \lesssim_{\eta, b} \|u_{n}\|_{X^{s, b-1}_{\tau}}, \quad s \in \mathbb{R}, \, b>1/2.
\end{equation}
We stress that all given estimates are uniform in $\tau$.
\end{lemma}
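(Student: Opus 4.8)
The plan is to establish all four estimates on the Fourier side, i.e.\ after applying $\mathcal F_{\tau,x}$, where the $X^{s,b}_\tau$-weight becomes the pointwise multiplier $\langle k\rangle^s\langle d_\tau(\sigma-k^2)\rangle^b$ on $L^2l^2$ over the fundamental domain $\sigma\in[-\pi/\tau,\pi/\tau]$. Two elementary facts carry everything. First, by Poisson summation the discrete-time Fourier transform $\tau\sum_m\eta(m\tau)\e^{im\tau\sigma}$ of a sampled Schwartz function is the $2\pi/\tau$-periodization of the Fourier transform of $\eta$; since the latter is rapidly decaying, the restriction of this periodization to $[-\pi/\tau,\pi/\tau]$, together with all its $\langle\sigma\rangle^\theta$-weighted $L^1$ and $L^2$ norms, is bounded uniformly in $\tau\in(0,1]$. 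Hence multiplication of a sequence by $\eta(n\tau)$ amounts to convolution of its transform in $\sigma$ by a uniformly nice kernel. Second, from $|d_\tau(a)-d_\tau(b)|=\tau^{-1}|\e^{i\tau a}-\e^{i\tau b}|\le|a-b|$ one gets $\langle d_\tau(a)\rangle\lesssim\langle d_\tau(b)\rangle\langle a-b\rangle$ uniformly in $\tau$, hence for every $b\in\mathbb R$ the shift inequality $\langle d_\tau(\sigma-k^2)\rangle^b\lesssim\langle d_\tau(\sigma'-k^2)\rangle^b\langle\sigma-\sigma'\rangle^{|b|}$; recall also $|d_\tau(\sigma)|\sim|\sigma|$ on $[-\pi/\tau,\pi/\tau]$ and the $2\pi/\tau$-periodicity of $d_\tau$.

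With these, \eqref{bourg1} is immediate: $\mathcal F_{\tau,x}(\e^{in\tau\partial_x^2}f)(\sigma,k)=\widehat f(k)\,\tau\sum_m\e^{im\tau(\sigma-k^2)}$ is a periodized Dirac comb supported on $\{\sigma\equiv k^2\}$, so multiplying by $\eta(n\tau)$ produces $\widehat f(k)$ times the periodized $\widehat\eta$ centered at $k^2$; computing the $X^{s,b}_\tau$-norm and using that $\langle d_\tau(\sigma-k^2)\rangle$ and the periodized $\widehat\eta$ are both functions of $r:=(\sigma-k^2)\bmod(2\pi/\tau)\in[-\pi/\tau,\pi/\tau]$, on which $\langle d_\tau(r)\rangle\sim\langle r\rangle$, one is left with $\|f\|_{H^s}$ times $\sup_k$ of an $L^2$ integral of $\langle r\rangle^b$ against the periodized $\widehat\eta(r)$ over $r\in[-\pi/\tau,\pi/\tau]$, which is $\lesssim\|f\|_{H^s}$. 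For \eqref{bourg2}, multiplication by $\eta(n\tau)$ is convolution in $\sigma$ with the periodized $\widehat\eta$; applying the shift inequality for the weight with $\sigma'$ the integration variable and Young's inequality $L^2\ast L^1\to L^2$ reduces the bound to $\|\langle\sigma\rangle^{|b|}\,(\text{periodized }\widehat\eta)\|_{L^1(-\pi/\tau,\pi/\tau)}\lesssim1$, uniform in $\tau$.

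The estimate \eqref{bourg3} is the discrete time-localization gain, proved by transcribing the classical Bourgain-space argument. Writing $\eta(n\tau/T)=\eta(n/N)$ with $N=T/\tau\ge1$, its discrete-time Fourier transform is the $2\pi/\tau$-periodization of $T\widehat\eta(T\,\cdot\,)$, for which $\|\langle\sigma\rangle^\theta\,T\widehat\eta(T\sigma)\|_{L^1}\lesssim T^{-\theta}$ for $\theta>0$ and $\lesssim1$ for $\theta=0$. Convolving with this kernel and using the shift inequality for the weight gives, for $0\le b'\le b<1/2$, a factor $T^{-b}$ from the $L^1$-norm against $\langle\sigma\rangle^{b}$, but an extra $T^{b}$ is gained because the kernel has total mass $O(1)$ while the weight is $\lesssim1$ on the low-modulation piece $\{|d_\tau(\sigma-k^2)|\lesssim T^{-1}\}$ — exactly the bookkeeping of the continuous case — yielding the gain $T^{b}$; the symmetric case $-1/2<b'\le b\le0$ follows by duality and the general case by composing the two. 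Finally, \eqref{bourg4} is the discrete Duhamel estimate. Conjugating by $\e^{in\tau\partial_x^2}$ turns the operator $u_n\mapsto\tau\sum_{m=0}^n\e^{i(n-m)\tau\partial_x^2}u_m$ into the discrete primitive $v_n\mapsto\tau\sum_{m=0}^n v_m$, whose symbol equals, up to a boundary term supported near $\{\sigma\equiv k^2\}$, division by $d_\tau(\sigma-k^2)$; one splits on $\{|d_\tau(\sigma-k^2)|\ge1\}$ and $\{|d_\tau(\sigma-k^2)|<1\}$. On the first set the division gains the full power $\langle d_\tau(\sigma-k^2)\rangle^{-1}$ needed to pass from $X^{s,b-1}_\tau$ to $X^{s,b}_\tau$, the one-sided truncation $\sum_{m=0}^n$ being absorbed by a discrete Hilbert-transform bound uniform in $\tau$ (this is where $b>1/2$ enters); on the second set $\langle d_\tau(\sigma-k^2)\rangle^{b-1}\sim1$, and after Taylor-expanding $\e^{-im\tau(\sigma-k^2)}$ and reinserting the cutoff $\eta(n\tau)$ one is reduced, term by term, to the homogeneous estimate \eqref{bourg1}.

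The main obstacle is \eqref{bourg4} — and, to a lesser degree, \eqref{bourg3} — precisely because the one-sided truncation $\sum_{m=0}^n$ (the discrete analogue of integrating only up to time $t$) must be controlled by a bound uniform in $\tau$, while the resonance function $\sigma\mapsto d_\tau(\sigma-k^2)$ is only $2\pi/\tau$-periodic rather than a genuine polynomial. Concretely one must always restrict the $\sigma$-integration to the fundamental domain and check that the Poisson-summation tails of every kernel appearing (those of $\widehat\eta$, of $T\widehat\eta(T\,\cdot\,)$, and of the Hilbert-type kernel) contribute $O(1)$ rather than a negative power of $\tau$, and that the constants in the discrete Hilbert-transform and Young estimates do not degenerate as $\tau\to0$. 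Both go through because $\widehat\eta$ is Schwartz and $|d_\tau(\sigma)|\sim|\sigma|$ on $[-\pi/\tau,\pi/\tau]$, so each region reduces to its continuous counterpart with $\tau$-independent constants; the remaining verification is routine but lengthy, which is why one may alternatively invoke \cite[Lemma 3.4]{ORS19} directly.
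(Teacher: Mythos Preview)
The paper does not give its own proof of this lemma: it merely states the result and refers to \cite[Lemma~3.4]{ORS19}. Your proposal is therefore not competing against an argument in the present paper but against the one in \cite{ORS19}, and you already acknowledge this in your last sentence.

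Your outline is the standard one used there. The two ingredients you isolate --- Poisson summation for the discrete-time transform of sampled cutoffs, and the shift inequality $\langle d_\tau(a)\rangle\lesssim\langle d_\tau(b)\rangle\langle a-b\rangle$ --- are exactly the right tools, and your treatment of \eqref{bourg1} and \eqref{bourg2} is accurate. For \eqref{bourg3} your description of where the gain $T^{b-b'}$ comes from is imprecise: the clean way is to conjugate by the free flow so that the question becomes the purely temporal estimate $\|\eta(n\tau/T)v_n\|_{H^{b'}_\tau}\lesssim T^{b-b'}\|v_n\|_{H^b_\tau}$, and then split $\widehat{v_n}$ at modulation $|\sigma|\sim T^{-1}$; on the high piece the weight itself gives the gain, while on the low piece one uses $\|\eta(\cdot/T)\|_{l^2_\tau}\lesssim T^{1/2}$ together with an $l^\infty_\tau$ bound on the low-frequency part (this is where $b<1/2$ enters). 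Your sentence ``a factor $T^{-b}$\ldots but an extra $T^b$ is gained'' does not track the exponents correctly. For \eqref{bourg4} your high/low modulation decomposition and the reduction of the low-modulation boundary term to \eqref{bourg1} via Taylor expansion is the right strategy and matches \cite{ORS19}; the phrase ``discrete Hilbert-transform bound'' is a reasonable shorthand for how the one-sided truncation is handled, though in practice one writes the kernel explicitly and checks its weighted $L^1$ norm in $\sigma$.

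In short: since the paper offers no proof, there is nothing to compare beyond the citation, and your sketch is consistent with the cited argument, with the caveat that the bookkeeping for \eqref{bourg3} is muddled as written.
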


The crucial product property in the analysis of cubic NLS for these discrete spaces is given in the following lemma.
\begin{lemma}\label{prodd}
We have, uniformly for $\tau \in (0, 1]$,
\begin{equation}\label{prodd1}
\|\Pi_{\taul}u_{n} \|_{l^4_{\tau}L^4} \lesssim  \|u_{n}\|_{X^{0, {3\over 8}}_{\tau}}.
\end{equation}
This yields for any sequences $(u_{n})$, $(v_{n})$, $(w_{n})$ and for every $s_{1} \geq 0$,
\begin{equation}\label{prodd3}
\| \Pi_{\tau}\left( \Pi_{ \taul} u_{n} \Pi_{ \taul} v_{n} \Pi_{\taul} w_{n} \right) \|_{X^{s_{1}, - { 3 \over 8}}_{\tau}}
\lesssim    \|u_{n}\|_{X^{s_{1}, {3 \over 8}}_{\tau}}  \|v_{n}\|_{X^{s_{1}, {3 \over 8}}_{\tau}} \|w_{n}\|_{X^{s_{1}, {3 \over 8}}_{\tau}},
\end{equation}
again uniformly for $\tau \in (0, 1]$.
\end{lemma}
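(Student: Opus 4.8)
The plan is to obtain \eqref{prodd3} from the discrete $L^4$ bound \eqref{prodd1} by a duality argument, and to treat \eqref{prodd1} itself as the discrete counterpart of Bourgain's periodic $L^4$ Strichartz estimate.

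For \eqref{prodd1}, I would argue as in \cite{ORSBourg}: writing $\|\Pi_\tau u_n\|_{l^4_\tau L^4}^2 = \|\Pi_\tau u_n\,\overline{\Pi_\tau u_n}\|_{l^2_\tau L^2}$ and using Parseval's identity \eqref{parseval}, the square of the left-hand side becomes the $L^2l^2$ norm of the space-time convolution of $\widetilde{\Pi_\tau u_n}$ against its reflection. The key structural feature --- which is what makes the estimate global in $n$ and uniform in $\tau$ --- is that the spatial frequencies are confined by $\Pi_\tau$ to $\{|k|\le\tau^{-1/2}\}$ while the dual time variable $\sigma$ ranges over the compact interval $[-\pi/\tau,\pi/\tau)$, and that $|d_\tau(\cdot)|\sim|\cdot|$ there. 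After a dyadic decomposition in the modulation $\langle d_\tau(\sigma-k^2)\rangle$, a Cauchy--Schwarz in the convolution variables reduces matters to counting, for fixed output frequency $k$ and output modulation, the number of frequencies $k_1$ (with $k_2=k_1-k$) for which $2kk_1-k^2$ lies in a prescribed set modulo $2\pi/\tau$ of size comparable to the product of the two input modulations; this is the usual divisor-type count, and reassembling the dyadic pieces against the weights $\langle d_\tau\rangle^{3/8}$ produces the exponent $3/8$. Since this is precisely the discrete Strichartz estimate established in \cite{ORSBourg}, in the write-up I would simply invoke it.

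Granting \eqref{prodd1}, I first prove the case $s_1=0$ of \eqref{prodd3}. Since $X^{0,-3/8}_\tau$ is dual to $X^{0,3/8}_\tau$ with respect to the $l^2_\tau L^2$ pairing, it suffices to bound $|\langle \Pi_\tau(\Pi_\tau u_n\Pi_\tau v_n\Pi_\tau w_n),\phi_n\rangle_{l^2_\tau L^2}|$ for $\|\phi_n\|_{X^{0,3/8}_\tau}\le 1$; using that $\Pi_\tau$ is self-adjoint this pairing equals $\tau\sum_n\int_{\mathbb T}(\Pi_\tau u_n)(\Pi_\tau v_n)(\Pi_\tau w_n)\,\overline{(\Pi_\tau\phi_n)}\,\dd x$, and Hölder's inequality in both $n$ and $x$ with four exponents equal to $4$ gives
\[
\Bigl|\langle \Pi_\tau(\Pi_\tau u_n\Pi_\tau v_n\Pi_\tau w_n),\phi_n\rangle_{l^2_\tau L^2}\Bigr|
\le \|\Pi_\tau u_n\|_{l^4_\tau L^4}\|\Pi_\tau v_n\|_{l^4_\tau L^4}\|\Pi_\tau w_n\|_{l^4_\tau L^4}\|\Pi_\tau\phi_n\|_{l^4_\tau L^4},
\]
whence four applications of \eqref{prodd1} close the case $s_1=0$. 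For general $s_1\ge 0$, I would pass to the Fourier side: on the Fourier support of $\Pi_\tau(\Pi_\tau u_n\Pi_\tau v_n\Pi_\tau w_n)$ the output frequency $k$ is a sum $k_1+k_2+k_3$ of frequencies of the three (projected) factors, so $\langle k\rangle^{s_1}\lesssim_{s_1}\langle k_1\rangle^{s_1}+\langle k_2\rangle^{s_1}+\langle k_3\rangle^{s_1}$; together with $|a*b|\le|a|*|b|$ this dominates $\langle k\rangle^{s_1}|\widetilde{F_n}(\sigma,k)|$ by a sum of three terms, each a Fourier transform of an expression of the same form $\Pi_\tau(\Pi_\tau\,\cdot\,\Pi_\tau\,\cdot\,\Pi_\tau\,\cdot\,)$ in which the weight $\langle -i\partial_x\rangle^{s_1}$ has been placed on one of the inputs. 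Applying the $s_1=0$ case to each of these three terms and using $\|\langle -i\partial_x\rangle^{s_1}u_n\|_{X^{0,3/8}_\tau}=\|u_n\|_{X^{s_1,3/8}_\tau}$ yields \eqref{prodd3}.

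The main obstacle is \eqref{prodd1}: it is the only genuinely hard ingredient, resting on the discrete lattice-point count replacing Bourgain's $L^4$ estimate, and on the two features noted above (compactness of the $\sigma$-range forced by the time discretisation, and $|d_\tau|\sim|\cdot|$) that make the bound uniform in $\tau$. Once \eqref{prodd1} is available, \eqref{prodd3} is a soft duality-and-Hölder argument; the only bookkeeping to watch is the distribution of the $\langle k\rangle^{s_1}$ weight and the harmless presence of the various self-adjoint, $L^p$-bounded projectors $\Pi_\tau$.
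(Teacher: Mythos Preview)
Your proposal is correct and follows essentially the same route as the paper: you invoke \cite{ORSBourg} for \eqref{prodd1}, then derive \eqref{prodd3} by duality and H\"older. The only difference is cosmetic: the paper first records the dual estimate $\|\Pi_\tau f_n\|_{X^{0,-3/8}_\tau}\lesssim\|f_n\|_{l^{4/3}_\tau L^{4/3}}$ and then, for $s_1>0$, appeals to the generalized (Kato--Ponce) Leibniz rule in $L^{4/3}$, whereas you distribute the weight $\langle k\rangle^{s_1}$ directly on the Fourier side via $\langle k_1+k_2+k_3\rangle^{s_1}\lesssim_{s_1}\sum_i\langle k_i\rangle^{s_1}$ and reduce to the $s_1=0$ case. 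Your variant is slightly more elementary (it avoids invoking a fractional product rule on $\mathbb T$) and yields the same bound; the paper's formulation has the advantage of isolating the dual inequality \eqref{dualbourg}, which is reused several times later in the error analysis.
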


\begin{proof}
The estimate \eqref{prodd1} was proven in \cite{ORSBourg} (see Lemma 3.1), we refer to this paper for the proof.
Note that \eqref{prodd1} states that $\Pi_{\tau}$ is a continuous map from $X^{0, {3\over 8}}_{\tau}$
to $l^4_{\tau}L^4$.  We can then also deduce by duality  that the adjoint  $\Pi_{\tau}^*= \Pi_{\tau}$
is continuous from $l^{4\over 3}_{\tau}L^{4 \over 3}$ to  $X^{0, -{3\over 8}}_{\tau}$ with the same norm, which means that
\begin{equation}
   \label{dualbourg}
   \| \Pi_{\tau} u_{n}\|_{X^{0, -{3\over 8}}_{\tau}} \lesssim \|u_{n}\|_{l^{4\over 3}_{\tau}L^{4 \over 3}}.
\end{equation}
To deduce \eqref{prodd3}, we first use that thanks to \eqref{dualbourg} we have   for every $s_1 \geq 0$ \black
$$  
\| \Pi_{\tau}\left( \Pi_{ \taul} u_{n} \Pi_{ \taul} v_{n} \Pi_{\taul} w_{n} \right) \|_{X^{s_{1}, - { 3 \over 8}}_{\tau}}
\lesssim \left\| \langle \partial_{x} \rangle^{s_{1}} \left(\Pi_{ \taul} u_{n} \Pi_{ \taul} v_{n} \Pi_{\taul} w_{n} \right) \right\|_{l^{4 \over 3}_{\tau}L^{4 \over 3}}.
$$
Next, by using the generalized Leibniz rule and the H\"older inequality, we obtain that
$$  \| \Pi_{\tau}\left( \Pi_{ \taul} u_{n} \Pi_{ \taul} v_{n} \Pi_{\taul} w_{n} \right) \|_{X^{s_{1}, - { 3 \over 8}}_{\tau}}
     \lesssim \| \langle \partial_{x} \rangle^{s_{1}} \Pi_{\tau} u_{n} \|_{l^4_{\tau}L^4} \| \langle \partial_{x} \rangle^{s_{1}} \Pi_{\tau} v_{n} \|_{l^4_{\tau}L^4} \| \langle \partial_{x} \rangle^{s_{1}} \Pi_{\tau} w_{n} \|_{l^4_{\tau}L^4}
$$
and we conclude by using \eqref{prodd1} again.
\end{proof}

Finally, the estimates of Corollary \ref{corsb} for the continuous solution, gives the following property
for its restriction on the grid.
\begin{proposition}\label{propunifdisc}
Let $u_{\tau}$ be the solution of \eqref{nlsP} and define the sequence  $u_{\tau}^n= u_{\tau}(n\tau + t', x)$. Assume that $u_0 \in H^{s_{0}}$, $s_{0}> 0 $. Then, for every   $s_{1}$,  such that $0 \leq s_{1} <  s_{0}$,  we have that
$$
\sup_{t' \in [0, 4 \tau]} \|\eta(n\tau) u^n_{\tau}\|_{X^{s_{1}, {3 \over 8}}_{\tau}} \leq C_{T}.
$$
\end{proposition}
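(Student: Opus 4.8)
The plan is to transfer the continuous Bourgain bound of Corollary \ref{corsb} to the discrete setting by evaluating the continuous solution on the time grid and using the discrete analogue of the standard ``continuous implies discrete'' principle for Bourgain norms. First I would recall that by Corollary \ref{corsb}, for any $b\in(5/8,1]$ and any $0\le s_1<s_0$ we have $\|u_\tau\|_{X^{s_1,b}(T')}\le M_{T'}$ uniformly in $\tau$, where I take $T'$ slightly larger than $T$ so that the cutoff $\eta$ (supported, say, in $[-T',T']$) is harmless; after extending $u_\tau$ to an element $\overline{u_\tau}\in X^{s_1,b}(\mathbb R)$ with comparable norm, it suffices to bound $\|\eta(n\tau)\,\overline{u_\tau}(n\tau+t',\cdot)\|_{X^{s_1,3/8}_\tau}$ uniformly in $t'\in[0,4\tau]$ and in $\tau$.

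The key step is a sampling lemma: if $f(t,x)\in X^{s_1,b}$ with $b>1/2$, then the sequence $f^n(x)=f(n\tau+t',x)$, suitably localized in $n$, satisfies $\|\eta(n\tau)f^n\|_{X^{s_1,b'}_\tau}\lesssim \|f\|_{X^{s_1,b}}$ for any $b'<1/2$ (here we need $b'=3/8$), uniformly in $\tau$ and $t'$. This is proven by writing $f$ through its space-time Fourier representation $f(t,x)=\sum_k e^{ikx}\int e^{i\sigma t}\,\widetilde f(\sigma,k)\,d\sigma$, splitting the $\sigma$-integral dyadically according to the size of $|\sigma+k^2|$, and on each piece using that $e^{-i(n\tau+t')k^2}f^n$ is (up to the harmless shift \eqref{bshift}) a discretization of a function whose time-frequency support is a single dyadic band; the Poisson summation / aliasing argument then shows that $d_\tau(\sigma-k^2)$ on the grid is comparable to $\sigma-k^2$ on the dominant band while the high-frequency aliases are controlled because $b>b'$ gives a summable dyadic loss. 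The $\langle k\rangle^{s_1}$ weight plays no role and is simply carried along. This sampling estimate is exactly the discrete counterpart of the elementary inequality $\|u(n\tau+t',\cdot)\|_{l^2_\tau H^s}\lesssim\|u\|_{X^{s,b}}$ for $b>1/2$ combined with the interpolation in $b$; it is of the same nature as \eqref{bourg1} in Lemma \ref{bourgainfaciled} but with a general $X^{s,b}$ input rather than a free evolution, and I expect it is proven in \cite{ORSBourg} (or follows from the arguments there), so I would cite it and apply it directly with $b\in(5/8,1]$ and $b'=3/8$.

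Putting the pieces together: choose $b\in(5/8,1]$; by Corollary \ref{corsb}, $\|\overline{u_\tau}\|_{X^{s_1,b}(\mathbb R)}\lesssim M_{T'}$ uniformly in $\tau$; by the sampling lemma applied to $\overline{u_\tau}$ with the cutoff $\eta$, we get $\sup_{t'\in[0,4\tau]}\|\eta(n\tau)u^n_\tau\|_{X^{s_1,3/8}_\tau}\lesssim M_{T'}=:C_T$, uniformly in $\tau\in(0,1]$. The main obstacle is making the sampling/aliasing step fully rigorous and uniform in $\tau$: one must check that the periodization inherent in $d_\tau$ does not spoil the comparison $\langle d_\tau(\sigma-k^2)\rangle\sim\langle\sigma-k^2\rangle$ on the relevant frequency band, which forces the input regularity $b>1/2$ (so that the aliased tails are summable) and is precisely why the weaker exponent $3/8<1/2$ appears on the output side; the shift by $t'\in[0,4\tau]$ is absorbed using \eqref{bshift}--\eqref{shiftt}, and the $t'$-dependence of all constants is uniform because $t'$ ranges over a set of size $O(\tau)$.
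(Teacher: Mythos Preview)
Your proposal is correct and matches the paper's intended approach: the paper does not give a detailed proof of this proposition but simply states that ``the estimates of Corollary~\ref{corsb} for the continuous solution give the following property for its restriction on the grid,'' i.e., exactly the transfer you describe from the continuous $X^{s_1,b}$ bound (with $b\in(5/8,1]$) to the discrete $X^{s_1,3/8}_\tau$ norm via a sampling/restriction lemma, with the details deferred to \cite{ORSBourg}. Your sketch of the sampling lemma (dyadic decomposition in $\sigma+k^2$, aliasing control using $b>1/2>b'$, absorption of the $t'$-shift via \eqref{bshift}--\eqref{shiftt}) is the right mechanism and is indeed what is established in \cite{ORSBourg}.
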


In the following we shall still denote by $u_{\tau}$ a function which belongs globally  to $X^{s_{0}, b}(\mathbb{R}\times \mathbb{T})$,
 coincides   on $[0, T]$ with the solution $u_{\tau}$ of \eqref{nlsP}  given by Proposition \ref{propNLSK}  and vanishes for $ t \geq 2T$. With this notation,
we get from the previous proposition that
$$
\sup_{t' \in [0, 4 \tau]} \| u_{\tau}(n\tau + t', x)  \|_{X^{s_{1}, {3 \over 8}}_{\tau}} \leq C_{T}.
$$

\section{Global error estimates}
Let $e^{k} = u_\tau(t_k)- u^k$ denote the global error for the modified equation and let $\eta$ be a smooth and compactly supported function, which is one on $[-1, 1]$ and supported in $[-2, 2]$.
Using the properties of $\eta$ allows us to rewrite \eqref{global} as
\begin{equation}
\label{eqen}
e^{n}= i \mu \tau \eta(t_{n})   \sum_{k=0}^{n-1}  \e^{i (n-k)  \tau \partial_x^2} \eta\left( {t_{k} \over T_{1}}\right)
\bigl( \Phi_{\mathcal{N}}^\tau(\vta(t_k)) - \Phi_{\mathcal{N}}^\tau(u_{\tau}(t_{k})-e^k)\bigr) + \mathcal{R}_{n}
\end{equation}
with
\begin{equation}
\label{Rn}
\mathcal{R}_{n}= i \eta(t_{n})\mu  \sum_{k=0}^{n-1}  \e^{i (n-k)  \tau \partial_x^2} \eta(t_{k}) \mathcal{E}_{\text{loc}}(t_k,\tau,\vta)
\end{equation}
for $0 \leq n \leq N_{1}$, where $N_{1}=\lfloor {T_{1}\over \tau}\rfloor$ with $T_{1} \leq \min(1, T)$.
The benefit if this modification is that the solution of \eqref{eqen} is globally defined, so that we can use global
Bourgain spaces to estimate it. \black

The aim of this section is the proof of the following estimate of the global error $ \mathcal{R}_{n}.$
\begin{proposition}
 \label{globalerror}
 For $b  \in (1/2, 5/8)$, $ s_{1} \in [0, 2]$, $0 \leq s_{1}< s_{0}$,  we have
 $$\|\mathcal{R}_{n}\|_{X^{0, b}_{\tau}}\leq C_{T}\tau^{s_{1} \over 2}.$$
\end{proposition}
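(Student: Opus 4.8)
The plan is to estimate each of the three local error contributions $\mathcal{E}_1(t_k)$, $\mathcal{E}_2(t_k)$, $\mathcal{E}_3(t_k)$ from \eqref{localErr} separately, insert them into $\mathcal{R}_n$ via \eqref{Rn}, and use the ``Duhamel'' bound \eqref{bourg4} of Lemma~\ref{bourgainfaciled} to convert the discrete sum $\tau\sum_k \e^{i(n-k)\tau\partial_x^2}(\,\cdot\,)$ into an $X^{0,b}_\tau$ estimate controlled by the $X^{0,b-1}_\tau$ norm of the summand. Since $b<5/8$, we have $b-1<-3/8$, and by the embedding \eqref{embdisc1} it suffices to control the summand in $X^{0,-3/8}_\tau$ (picking up a harmless factor $\tau^{(b-1)-(-3/8)}\sim\tau^{b-5/8}$, which we must be careful does not destroy the rate — in fact $\tau^{-1}\mathcal E_1$ etc. will carry enough powers of $\tau$ to absorb it). So the task reduces to showing that each $\tau^{-1}\mathcal E_i(t_k)$, viewed as a sequence in $k$ and localized by $\eta$, has $X^{0,-3/8}_\tau$ norm bounded by $\tau^{s_1/2}$, using the trilinear estimate \eqref{prodd3}, the duality bound \eqref{dualbourg}, and the uniform bounds on $u_\tau$ from Proposition~\ref{propunifdisc} and Corollary~\ref{corsb}.

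The term $\mathcal{E}_1(t_k)$ involves the oscillation factor $(\e^{-is\partial_x^2}-1)$ acting on $|\Pi_\tau u_\tau(t_k+s)|^2\Pi_\tau u_\tau(t_k+s)$ for $s\in[0,\tau]$. The gain here comes from two sources: first, the symbol $|\e^{-is\xi^2}-1|\lesssim s|\xi|^2\le s\tau^{-1}$ on the range $|\xi|\le\tau^{-1/2}$ of $\Pi_\tau$, and more efficiently $|\e^{-is\xi^2}-1|^{\theta}\lesssim (s|\xi|^2)^\theta$; combining with the frequency truncation and the available regularity $s_1$ of $u_\tau$, one extracts a factor of size $\tau^{s_1/2}$ after also using the integration $\int_0^\tau ds$ which gives another $\tau$, against which the $\tau^{-1}$ from $\tau^{-1}\mathcal E_1$ is spent. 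One then applies \eqref{dualbourg} to pull $\Pi_\tau$ out into $X^{0,-3/8}_\tau$ and bounds the $l^{4/3}_\tau L^{4/3}$ norm of the cubic term by \eqref{prodd1} (Hölder in the form $L^{4/3}\hookleftarrow L^4\cdot L^4\cdot L^4$ at fixed time, then $l^{4/3}_\tau\hookleftarrow l^4_\tau\cdot l^4_\tau\cdot l^4_\tau$), using Proposition~\ref{propunifdisc}. For $\mathcal{E}_2(t_k)$ one uses the representation \eqref{vd} of $u_\tau(t_k+s)-u_\tau(t_k)$: its first piece $(\e^{is\partial_x^2}-1)u_\tau(t_k)$ again yields $\tau^{s_1/2}$ by the same symbol estimate together with the $H^{s_1}$-regularity of $u_\tau$, while its Duhamel piece is already $O(\tau)$ because of the $\int_0^s d\xi$ and can be absorbed directly; then one multiplies by $|\Pi_\tau u_\tau(t_k+s)|^2$ and applies Hölder plus \eqref{prodd1} as before. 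The term $\mathcal{E}_3(t_k)$ compares $|\Pi_\tau u_\tau(t_k)|^2$ with $\tfrac{\e^{i\mu\tau|\Pi_\tau u_\tau(t_k)|^2}-1}{i\mu\tau}$; since $|\e^{iz}-1-iz|\lesssim z^2$ this difference is pointwise $O(\tau\,|\Pi_\tau u_\tau(t_k)|^4)$, which after the $\int_0^\tau ds$ gives $\tau^2$, more than enough — here one bounds $|\Pi_\tau u_\tau|^4\Pi_\tau u_\tau$ in $l^{4/3}_\tau L^{4/3}$ by Hölder and \eqref{prodd1} and uses that $\Pi_\tau$ is bounded on $L^4$ to control the extra powers (a Bernstein-type bound $\|\Pi_\tau f\|_{L^\infty}\lesssim\tau^{-1/4}\|\Pi_\tau f\|_{L^4}$ may be invoked to tame the degree-$5$ term, the lost $\tau^{-1/4}$ being covered by the surplus).

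In all three cases the $\eta$ cutoffs are handled by \eqref{bourg2} and \eqref{bourg3} of Lemma~\ref{bourgainfaciled} (and \eqref{bshift}, \eqref{shiftt} for the free-evolution shifts $\e^{i(n-k)\tau\partial_x^2}$), and the shift $t'\in[0,4\tau]$ in Proposition~\ref{propunifdisc} is exactly what is needed to evaluate $u_\tau$ at the off-grid points $t_k+s$, $t_k+\xi$. The main obstacle I anticipate is bookkeeping the powers of $\tau$ so that after paying $\tau^{-1}$ for the $\mathcal E_{\mathrm{loc}}$ normalization, $\tau^{b-5/8}$ for the embedding \eqref{embdisc1}, and any Bernstein losses $\tau^{-1/4}$, one is still left with a clean $\tau^{s_1/2}$; this forces a careful choice of the interpolation exponent $\theta$ when estimating $|\e^{\pm is\xi^2}-1|^\theta\lesssim (s|\xi|^2)^\theta\lesssim s^\theta\tau^{-\theta}$ and matching $\theta$ against the regularity budget $s_1\le 2$, $s_1<s_0$ — essentially one wants $\theta=s_1/2$ so that the surviving power is $\tau^{\theta}=\tau^{s_1/2}$ after the $\int_0^\tau ds$ provides the remaining unit power that cancels the normalization. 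The restriction $s_1\le 2$ enters precisely because beyond two derivatives the symbol bound $|\e^{is\xi^2}-1|\lesssim s|\xi|^2$ can no longer be improved by taking higher powers.
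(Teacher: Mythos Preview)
Your overall strategy matches the paper's: apply \eqref{bourg4} to reduce to estimating $\tau^{-1}\|\mathcal{E}_i(t_n)\|_{X^{0,-3/8}_\tau}$ for $i=1,2,3$, then use the trilinear estimate \eqref{prodd3} (equivalently \eqref{dualbourg} followed by H\"older and \eqref{prodd1}) together with Proposition~\ref{propunifdisc}. Two points, however, need correction.

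First, a small one: since $b<5/8$ gives $b-1<-3/8$, the passage $\|\cdot\|_{X^{0,b-1}_\tau}\le\|\cdot\|_{X^{0,-3/8}_\tau}$ is the \emph{trivial} monotonicity of the Bourgain norm in the second index; no invocation of \eqref{embdisc1} is needed and there is no $\tau^{b-5/8}$ factor (which, incidentally, would diverge as $\tau\to 0$, not be ``harmless'').

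Second, and more substantively, your treatment of the Duhamel contribution to $\mathcal{E}_2$ is incomplete, and you overlook an analogous piece of $\mathcal{E}_3$. After the outer $\int_0^\tau ds$ cancels the $\tau^{-1}$ normalization, the inner $\int_0^s d\xi$ does give a factor $\tau$, but to insert the resulting cubic Duhamel integrand as the third factor in \eqref{prodd3} you must control it in $X^{0,3/8}_\tau$, not $X^{0,-3/8}_\tau$. At low regularity the only route is via \eqref{embdisc1}, which costs $\tau^{-3/4}$; the net gain is thus only $\tau^{1/4}$. This suffices for $s_1\le 1/2$ but \emph{not} for $s_1\in(1/2,2]$. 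The paper covers the latter range by a separate argument, abandoning the Bourgain machinery for this term and using directly that $H^{s_1}$ embeds into every $L^p$ and is an algebra, which yields a clean factor $\tau$. Finally, note that $\mathcal{E}_3$ contains, besides the Taylor remainder $\frac{\e^{i\mu\tau|\cdot|^2}-1-i\mu\tau|\cdot|^2}{i\mu\tau}$ that you describe, a contribution $\bigl(|\Pi_\tau u_\tau(t_k+s)|^2-|\Pi_\tau u_\tau(t_k)|^2\bigr)\Pi_\tau u_\tau(t_k)$ coming from the $s$-dependence of the first factor in \eqref{localErr}; this term is handled exactly like $\mathcal{E}_2$ (via \eqref{vd}) and inherits the same case distinction.
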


 \begin{proof}
By using \eqref{bourg4}, we first obtain that
\begin{equation}
\label{estE0} \|\mathcal{R}_{n} \|_{X^{0, b}_{\tau}} \lesssim  \tau^{-1}
\| \mathcal{E}_{\text{loc}}(t_n,\tau,\vta)\|_{X^{0, b-1}_{\tau}} \lesssim
 \tau^{-1} \| \mathcal{E}_{\text{loc}}(t_n,\tau,\vta)\|_{X^{0, -{3 \over 8}}_{\tau}}.
 \end{equation}
 Thanks to \eqref{localErr}, we need to estimate $ \tau^{-1} \| \mathcal{E}_{i}\|_{X^{0, -{3 \over 8}}_{\tau}}$, $i=1, \, 2, \, 3$.

 To estimate $\mathcal{E}_{1}$, since $\e^{-is \partial_{x}^2}-1$ and $\Pi_{\taul}$ are Fourier multipliers
   in the space variable, and since  $\Pi_{\taul}$ projects on frequencies less than $\tau^{- {1 \over 2}},$ we observe that
  for any function $F(t_{n})$, we have
  $$ \sup_{s \in [- \tau, \tau]}\| (\e^{-is \partial_{x}^2}-1) \Pi_{\tau} F(t_{n}) \|_{X^{0, b}_{\tau}}
   \lesssim \tau^{s_{1}\over 2}  \| F(t_{n}) \|_{X^{s_{1}, b}_{\tau}}$$
   for $0 \leq s_{1} \leq 2$.
   Therefore, we get that
$$ \tau^{-1} \| \mathcal{E}_{1}(t_n)\|_{X^{0, -{3 \over 8}}_{\tau}}
 \lesssim \tau^{s_{1} \over 2} \sup_{s \in [0, \tau]} 
\left\| \Pi_{\tau}\left(\vert\Pi_{\taul} \vta(t_n+s) \vert^2  \Pi_{\taul} \vta(t_n+s)\right)\right\|_{X^{s_{1}, -{3 \over 8}}_{\tau}}.$$
This yields, thanks to Lemma \ref{prodd},
$$  \tau^{-1} \| \mathcal{E}_{1}(t_n)\|_{X^{0, -{3 \over 8}}_{\tau}}
 \lesssim \tau^{s_{1} \over 2} \sup_{s \in [0, \tau]} \|u_{\tau}(t_{n} + s)\|_{X^{s_{1}, {3 \over 8}}_{\tau}}^3.$$
Therefore, by using  Proposition \ref{propunifdisc}, we obtain that
\begin{equation}
\label{estE1} \tau^{-1} \| \mathcal{E}_{1}(t_n)\|_{X^{0, -{3 \over 8}}_{\tau}}
 \lesssim  C_{T}\tau^{s_{1} \over 2}.
 \end{equation}
 Next, by using \eqref{localErr} and \eqref{vd}, we get that
 $$  \tau^{-1} \| \mathcal{E}_{2}(t_n)\|_{X^{0, -{3 \over 8}}_{\tau}}
  \lesssim  \tau^{-1} \| \mathcal{E}_{2, 1}(t_n)\|_{X^{0, -{3 \over 8}}_{\tau}}
  + \tau^{-1} \| \mathcal{E}_{2, 2}(t_n)\|_{X^{0, -{3 \over 8}}_{\tau}}, $$
  where
  \begin{align*}
  \mathcal{E}_{2, 1}(t_n)
  & = \Pi_{\taul} \int_{0}^{\tau }
 \vert \Pi_{\taul} \vta(t_n+s) \vert^2   \Pi_{\taul} \Big(( \e^{is \partial_{x}^2}- 1) u_{\tau}(t_{n})\Big)\dd s
 , \\
   \mathcal{E}_{2, 2}(t_n)
  & = i \mu \Pi_{\taul} \int_{0}^{\tau }
 \vert \Pi_{\taul} \vta(t_n+s) \vert^2 \,  \Pi_{\taul}\!
\int_0^s  \e^{i  (s-\xi)  \partial_x^2}  \Pi_{\taul}\left( \vert \Pi_{\taul} \vta(t_n+\xi ) \vert^2  \Pi_{\taul} \vta(t_n+\xi ) \right) \dd\xi\dd s.
\end{align*}
By using again \eqref{prodd3}, we obtain that
$$  \tau^{-1} \| \mathcal{E}_{2, 1}(t_n)\|_{X^{0, -{3 \over 8}}_{\tau}}
 \lesssim   \sup_{s \in [0, \tau]}\left( \|u_{\tau}(t_{n} + s)\|_{X^{0, {3 \over 8}}_{\tau}}^2
 \| ( \e^{is \partial_{x}^2}- 1) \Pi_{\taul}u_{\tau}(t_{n})\|_{X^{0, {3 \over 8}}_{\tau}}\right).$$
  Since, for $s\in [0, \tau]$, we have
  $$  \| ( \e^{is \partial_{x}^2}- 1) \Pi_{\taul}u_{\tau}(t_{n})\|_{X^{0, {3 \over 8}}_{\tau}}
   \lesssim \tau^{s_{1} \over 2} \|u_{\tau}(t_{n})\|_{X^{s_{1}, {3 \over 8}}_{\tau}}, $$
   we get by using again Proposition \ref{propunifdisc} that
   $$  \tau^{-1} \| \mathcal{E}_{2, 1}(t_n)\|_{X^{0, -{3 \over 8}}_{\tau}}
    \leq C_{T} \tau^{s_{1} \over 2}.$$
In a similar way, we can first obtain thanks to \eqref{prodd3} that
\begin{align*}
\tau^{-1} \| \mathcal{E}_{2, 1}&(t_n)\|_{X^{0, -{3 \over 8}}_{\tau}}\\
&\lesssim   \sup_{s \in [0, \tau]}\left( \|u_{\tau}(t_{n} + s)\|_{X^{0, {3 \over 8}}_{\tau}}^2
  \left\|\int_0^s  \e^{i  (s-\xi)  \partial_x^2}  \Pi_{\taul}\left( \vert \Pi_{\taul} \vta(t_n+\xi ) \vert^2  \Pi_{\taul} \vta(t_n+\xi ) \right) \dd\xi \right\|_{X^{0, {3 \over 8}}_{\tau}}\right).
\end{align*}
  Next, by using successively \eqref{bshift},  \eqref{embdisc1} and \eqref{prodd3}, we write
  \begin{align*}
  \Bigl\|\int_0^s  \e^{i  (s-\xi)  \partial_x^2}  \Pi_{\taul}&\left( \vert \Pi_{\taul} \vta(t_n+\xi ) \vert^2  \Pi_{\taul} \vta(t_n+\xi ) \right) \dd\xi \Bigr\|_{X^{0, {3 \over 8}}_{\tau}}\\
&\lesssim  \tau \sup_{\xi \in [0, \tau]}  \left\| \vert \Pi_{\taul} \vta(t_n+\xi ) \vert^2  \Pi_{\taul} \vta(t_n+\xi ) \right\|_{X^{0, {3 \over 8}}_{\tau}}\\ 
&\lesssim \tau^{1 \over 4}
 \sup_{\xi \in [0, \tau]}  \left\| \vert \Pi_{\taul} \vta(t_n+\xi ) \vert^2  \Pi_{\taul} \vta(t_n+\xi ) \right\|_{X^{0, -{3 \over 8}}_{\tau}}
 \lesssim \tau^{1 \over 4}
 \sup_{\xi \in [0, \tau]}  \| \vta(t_n+\xi ) \|_{X^{0, {3 \over 8}}_{\tau}}^3.
  \end{align*}
Consequently, by using again Proposition \ref{propunifdisc}, we find that
$$ \tau^{-1} \| \mathcal{E}_{2, 2}(t_n)\|_{X^{0, -{3 \over 8}}_{\tau}}
    \leq C_{T} \tau^{1 \over 4}$$
    and hence that
    $$  \tau^{-1} \| \mathcal{E}_{2}(t_n)\|_{X^{0, -{3 \over 8}}_{\tau}}
    \leq C_{T} (\tau^{s_{1} \over 2} + \tau^{1 \over 4} ) \leq C_{T} \tau^{s_{1} \over 2}$$
    if $s_{1} \leq 1/2$.

For $s_{1} >1/2$, we can estimate $\mathcal{E}_{2, 2}$ in a different way. We just use that
\begin{multline*}
\tau^{-1}  \| \mathcal{E}_{2, 2}(t_n)\|_{X^{0, -{3 \over 8}}_{\tau}}
 \lesssim  \tau^{-1}  \| \mathcal{E}_{2, 2}(t_n)\|_{X^{0, 0}_{\tau}}
 \\  \lesssim    \sup_{s \in [0, \tau]}  \|u_{\tau}(t_{n} + s)\|_{l^\infty_{\tau}L^8}^2
  \left\|\int_0^s  \e^{i  (s-\xi)  \partial_x^2}  \Pi_{\taul}\left( \vert \Pi_{\taul} \vta(t_n+\xi ) \vert^2  \Pi_{\taul} \vta(t_n+\xi ) \right) \dd\xi \right\|_{l^\infty_{\tau}L^4}
\end{multline*}
and we employ the Sobolev embedding $H^{s_{1}}\subset L^p(\mathbb{T} )$ for every $p$ together with the fact that
 $  H^{s_{1}}$ is an  algebra to obtain that
 $$ \tau^{-1}  \| \mathcal{E}_{2, 2}(t_n)\|_{X^{0, -{3 \over 8}}_{\tau}}  \leq C_{T} \tau.$$
Combining these estimates, we finally get
 \begin{equation}
 \label{estE2}  \tau^{-1} \| \mathcal{E}_{2}(t_n)\|_{X^{0, -{3 \over 8}}_{\tau}}
    \leq C_{T} \tau^{s_{1} \over 2}
    \end{equation}
 for every $0 \leq s_{1} \leq 2.$

 It remains to estimate $\mathcal{E}_{3}.$ We can rewrite it as
 $$ \mathcal{E}_{3}(t_{n})=   \mathcal{E}_{3, 1}(t_{n}) +  \mathcal{E}_{3, 2}(t_{n})$$
  with
  \begin{align*}
  \mathcal{E}_{3, 1}(t_{n}) &= \Pi_{\taul} \int_{0}^{\tau }
\left( \vert \Pi_{\taul} \vta(t_n+s) \vert^2 -   \vert \Pi_{\taul} \vta(t_n) \vert^2 \right) \, \Pi_{\taul} \vta(t_n) \,\dd s \\
 & =   \Pi_{\taul} \int_{0}^{\tau }
 \mbox{Re} \left(  (\Pi_{\taul} \vta(t_n+s) - \Pi_{\tau}\vta(t_{n} ) )(\overline{ \Pi_{\taul} \vta(t_n+s)} +\overline{\Pi_{\tau}\vta(t_{n} )})\right) \,    \Pi_{\taul} \vta(t_n) \,\dd s  \\
 \mathcal{E}_{3, 2}(t_{n})  &= - \tau  \Pi_{\taul} \left(
\frac{\e^{i\mu \tau \vert \Pi_{\taul} \vta(t_n) \vert^2}-1 -i \mu \tau  \vert \Pi_{\taul} \vta(t_n) \vert^2}{i \mu\tau}  \Pi_{\taul} \vta(t_n)
\right).
   \end{align*}
 By using again \eqref{vd}, the estimate of  $ \mathcal{E}_{3, 1}(t_{n})$ is similar to the estimate of $ \mathcal{E}_{2}(t_{n})$. We find again that
  $$ \tau^{-1} \| \mathcal{E}_{3, 1}(t_n)\|_{X^{0, -{3 \over 8}}_{\tau}}
    \leq C_{T} \tau^{s_{1} \over 2}.$$
    For $ \mathcal{E}_{3, 2}$, by using successively \eqref{dualbourg} and the estimate
    $$ \left| {\e^{i \mu \tau x}- 1 - i\mu  \tau x \over i \mu  \tau}  \right| \lesssim \tau |x|^2, \quad \forall x \in \mathbb{R},$$
     we get that
  $$ \tau^{-1} \| \mathcal{E}_{3, 2}(t_n)\|_{X^{0, -{3 \over 8}}_{\tau}}
   \lesssim \left\| \frac{\e^{i\mu \tau \vert \Pi_{\taul} \vta(t_n) \vert^2}-1 -i \mu \tau  \vert \Pi_{\taul} \vta(t_n) \vert^2}{i \mu\tau}  \Pi_{\taul} \vta(t_k)  \right\|_{l^{4\over 3}_{\tau}L^{4\over 3}}
    \lesssim  \tau  \left\|  \Pi_{\taul} \vta(t_n) \right\|_{l^{{20 \over 3}}_{\tau}L^{20 \over 3}}^5.$$
    Next, by using successively  that $W^{ {1 \over 10}, 4}\subset L^{20 \over 3}(\mathbb{T})$ and \eqref{prodd1}, we get that
  $$ \tau^{-1} \| \mathcal{E}_{3, 2}(t_n)\|_{X^{0, -{3 \over 8}}_{\tau}}
   \lesssim_{T} \tau \left\| \Pi_{\tau}  \vta(t_n) \right\|_{l^4_{\tau}W^{{1 \over 10}, 4} }
    \lesssim \tau \left\| \Pi_{\tau}  \vta(t_n) \right\|_{X^{ {1 \over 10}, {3 \over 8}}_{\tau}}.$$
  Consequently, if $2 \geq s_{1} >1/10$, we get from Proposition \ref{propunifdisc} that
$$  \tau^{-1} \| \mathcal{E}_{3, 2}(t_n)\|_{X^{0, -{3 \over 8}}_{\tau}}  \lesssim \tau C_{T} \lesssim \tau^{s_{1} \over 2} C_{T}.$$
 If $ s_{1} \leq 1/10$, we can write that
 $$ \tau^{-1} \| \mathcal{E}_{3, 2}(t_n)\|_{X^{0, -{3 \over 8}}_{\tau}}  \lesssim
    \tau \left\| \Pi_{\tau}  \vta(t_n) \right\|_{X^{ {1 \over 10}, {3 \over 8}}_{\tau}}
     \lesssim \tau^{ {19 \over 20} + {s_{1} \over 2}} C_{T} \lesssim \tau^{{s_{1}\over 2}}C_{T}.$$
     Consequently, we have also obtained that
  \begin{equation}
  \label{estE3}\tau^{-1} \| \mathcal{E}_{3}(t_n)\|_{X^{0, -{3 \over 8}}_{\tau}}
    \leq C_{T} \tau^{s_{1} \over 2}.
    \end{equation}
We end the proof gathering \eqref{estE0}, \eqref{estE1}, \eqref{estE2} and \eqref{estE3}.\end{proof}

\section{Proof of Theorem \ref{maintheo}}
We are now in a position to give the proof of Theorem \ref{maintheo}.
We first observe that thanks to Proposition \ref{propNLSK}, we have from the triangle inequality that
\begin{equation}
\label{erreurtotale}
\|u(t_{n})- u^n\|_{L^2} \leq  \|u(t_{n}) - u_{\tau}(t_n)\|_{L^2} + \|u_{\tau}(t_{n}) - u^n\|_{L^2} \leq C_{T}\tau^{s_{1}   \over 2} + \| e^n\|_{l^\infty_{\tau}L^2},
\end{equation}
where $e^{n}$ solves \eqref{eqen}. To get the error estimates of Theorem \ref{maintheo}, it thus suffices to estimate $ \|e^n\|_{X_{\tau}^{0,b}}$ for some $b\in (1/2, 5/8)$  thanks to \eqref{sobbourg}.

By using equation \eqref{eqen},  Lemma \ref{bourgainfaciled} and the error estimate of Proposition \ref{globalerror},  we get  that
\begin{equation}
\label{preuveerreur1}
\|e^n \|_{X^{0, b}_{\tau}} \leq   C_T T_{1}^{\varepsilon_{0}} \|\Phi_{\mathcal{N}}^\tau (u_{\tau}(t_{n}))-
 \Phi_{\mathcal{N}}^\tau (u_{\tau}(t_{n})-e^n)\|_{X^{0, - { 3 \over 8}}_{\tau}} +  C_{T}\tau^{ s_{1}\over 2}.
\end{equation}
Here, we further employed~\eqref{bourg3} with $T_1$ still to be determined and $\varepsilon_0=1-b-\frac38>0$. \black
Note that, by using \eqref{numFlow}, we can write
\begin{align*}
&\Phi_{\mathcal{N}}^\tau (u_{\tau}(t_{n}))-
 \Phi_{\mathcal{N}}^\tau (u_{\tau}(t_{n})-e^n)=
 \Pi_{\tau}\left( \Psi_{\mathcal{N}}^\tau (u_{\tau}(t_{n}))-
 \Psi_{\mathcal{N}}^\tau (u_{\tau}(t_{n})-e^n)\right), \\
&\Psi_\mathcal{N}^\tau(w)=
  \frac{\e^{i\mu \tau \vert \Pi_{\taul} w \vert^2}-1}{i \mu \tau} \Pi_{\taul} w,
\end{align*}
where we have, uniformly in $n$ and $\tau$, the pointwise estimate
\begin{equation}
\label{phipointwise}
 \left| { \Psi}_{\mathcal{N}}^\tau (u_{\tau}(t_{n}))-
{ \Psi}_{\mathcal{N}}^\tau (u_{\tau}(t_{n})-e^n)\right|
  \lesssim |\Pi_{\tau}u_{\tau}(t_{n})|^2 | \Pi_{\tau}e^{n}| + |\Pi_{\tau}u_{\tau}(t_{n})| |\Pi_{\tau}e^n|^2  + |\Pi_{\tau}e^n|^3.
  \end{equation}
  We can then  get by using \eqref{dualbourg} that
$$\|\Phi_{\mathcal{N}}^\tau (u_{\tau}(t_{n}))-
 \Phi_{\mathcal{N}}^\tau (u_{\tau}(t_{n})-e^n)\|_{X^{0, - { 3 \over 8}}_{\tau}}
  \lesssim \|{ \Psi}_{\mathcal{N}}^\tau (u_{\tau}(t_{n}))-
{  \Psi}_{\mathcal{N}}^\tau (u_{\tau}(t_{n})-e^n)\|_{l^{4 \over 3}_{\tau}L^{4\over 3}}
$$
and hence, thanks to \eqref{phipointwise}
and H\"older's inequality, that
\begin{multline*}\|\Phi_{\mathcal{N}}^\tau (u_{\tau}(t_{n}))-
 \Phi_{\mathcal{N}}^\tau (u_{\tau}(t_{n})-e^n)\|_{X^{0, - { 3 \over 8}}_{\tau}}
  \\\lesssim \|\Pi_{\tau}u_{\tau}(t_{n})\|_{l^4_{\tau}L^4}^2 \|\Pi_{\tau} e^n\|_{l^4_{\tau}L^4} +    \|\Pi_{\tau}u_{\tau}(t_{n})\|_{l^4_{\tau}L^4}
  \|\Pi_{\tau} e^n\|_{l^4_{\tau}L^4}^2 + \|\Pi_{\tau} e^n\|_{l^4_{\tau}L^4}^3.
  \end{multline*}
  Next, by using again \eqref{prodd1} and Proposition \ref{propunifdisc}, we finally get that
  $$  \|\Phi_{\mathcal{N}}^\tau (u_{\tau}(t_{n}))-
 \Phi_{\mathcal{N}}^\tau (u_{\tau}(t_{n})-e^n)\|_{X^{0, - { 3 \over 8}}_{\tau}}
  \leq C_{T}( \|e^n \|_{X^{0, {3 \over 8}}_{\tau}} + \|e^n \|_{X^{0, {3 \over 8}}_{\tau}}^2 + \|e^n \|_{X^{0, {3 \over 8}}_{\tau}}^3).$$
  This yields by using \eqref{preuveerreur1}
 $$ \|e^n \|_{X^{0, b}_{\tau}} \leq   C_T T_{1}^{\varepsilon_{0}}
 \left(  \|e^n \|_{X^{0, {3 \over 8}}_{\tau}} + \|e^n \|_{X^{0, {3 \over 8}}_{\tau}}^2 + \|e^n \|_{X^{0, {3 \over 8}}_{\tau}}^3\right) +
   C_{T}\tau^{ s_{1}\over 2}.$$
   By choosing $T_{1}$ sufficiently small we thus get that
$$
\|e^n \|_{X^{0, b}_{\tau}} \leq  C_{T}\tau^{s_{1} \over 2}.
$$
This proves the desired estimate \eqref{final1} for $ 0 \leq n \leq N_{1}= T_{1}/ \tau$. We can then iterate in a classical way the argument on  $ T_{1}/ \tau \leq  n \leq 2 T_{1}/\tau$ and so on to get the final estimate for $ 0 \leq n \leq T/\tau$.

\section{Numerical experiments}

In this section we numerically underline our convergence Theorem  \ref{maintheo}. We illustrate the convergence order of the filtered splitting method in the case of rough and smooth initial data. In Figure~\ref{fig} we solve the periodic Schr\"{o}dinger equation \eqref{nlsO} with the filtered Lie splitting \eqref{scheme} and, respectively, the  filtered Strang splitting with initial values
\begin{align*}
u(0) \in H^s\quad \text{with } s = \frac{1}{2}\text{ and }  s = 4;
\end{align*}
see, e.g., \cite{KOS19} for details on the construction of  rough initial values.   We employ a standard Fourier pseudospectral method for the discretization in space and we choose as largest Fourier mode $K = 2^{14}$, i.e., the spatial mesh size $\Delta x =  0.00038$.  As a reference solution we use the Strang splitting method with $K$ spatial points and a very small time step size $\tau$.   Then, for each time step $\tau$ we measure the error between the filtered Lie and filtered Strang splitting method and the projected reference solution by employing $\Pi_\tau$   in the corresponding discrete $L^2$ norm. Our numerical findings confirm their convergence rate of order $\mathcal{O}(\tau^{1/4})$ for solutions in $H^{1/2}$ (see Theorem  \ref{maintheo}) as well as their expected full order of convergence (order one for Lie and order two for Strang) in the case of smooth solutions.
\begin{figure}[h!]
\centering
\includegraphics[width=0.471\linewidth]{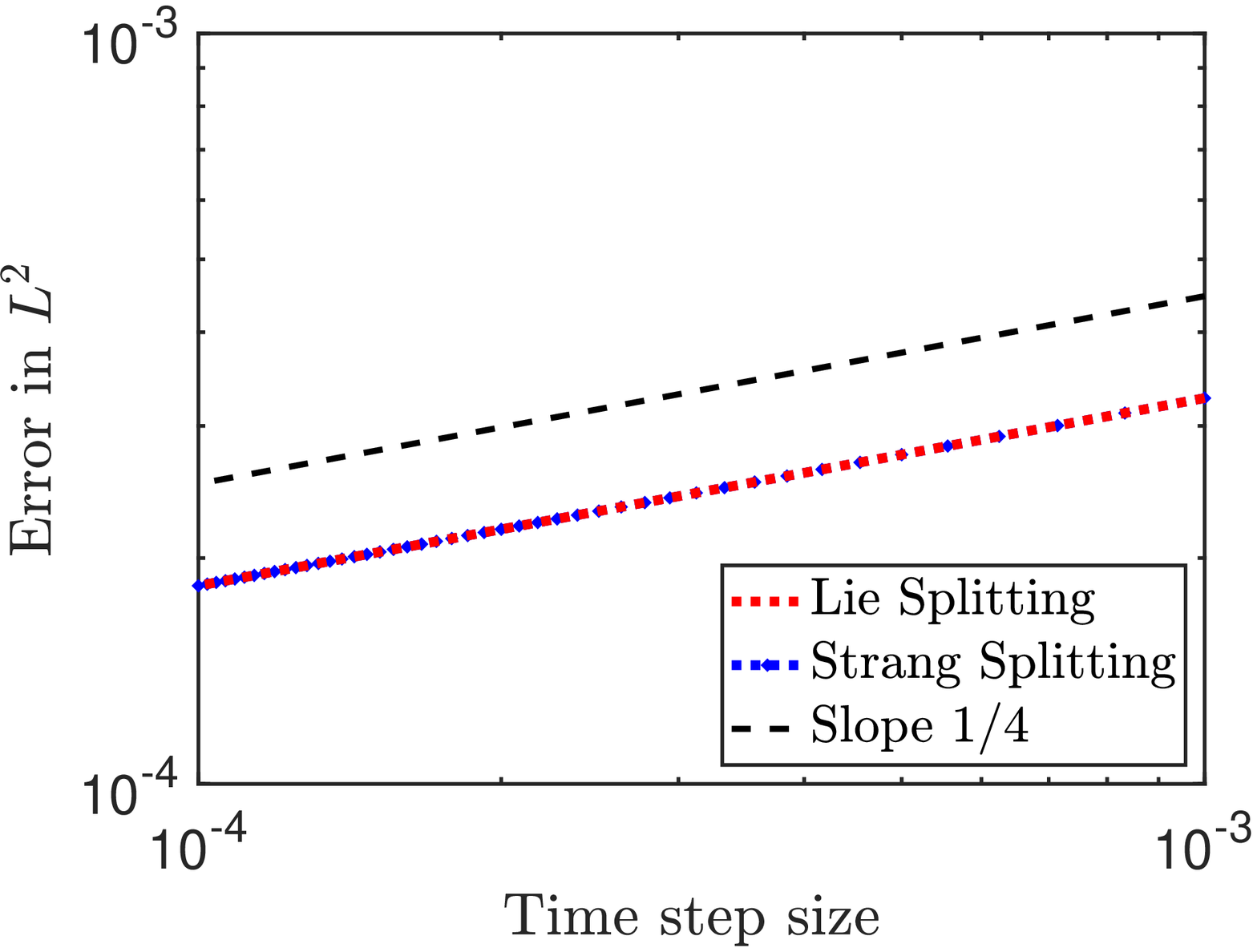}
\hfill
\includegraphics[width=0.48\linewidth]{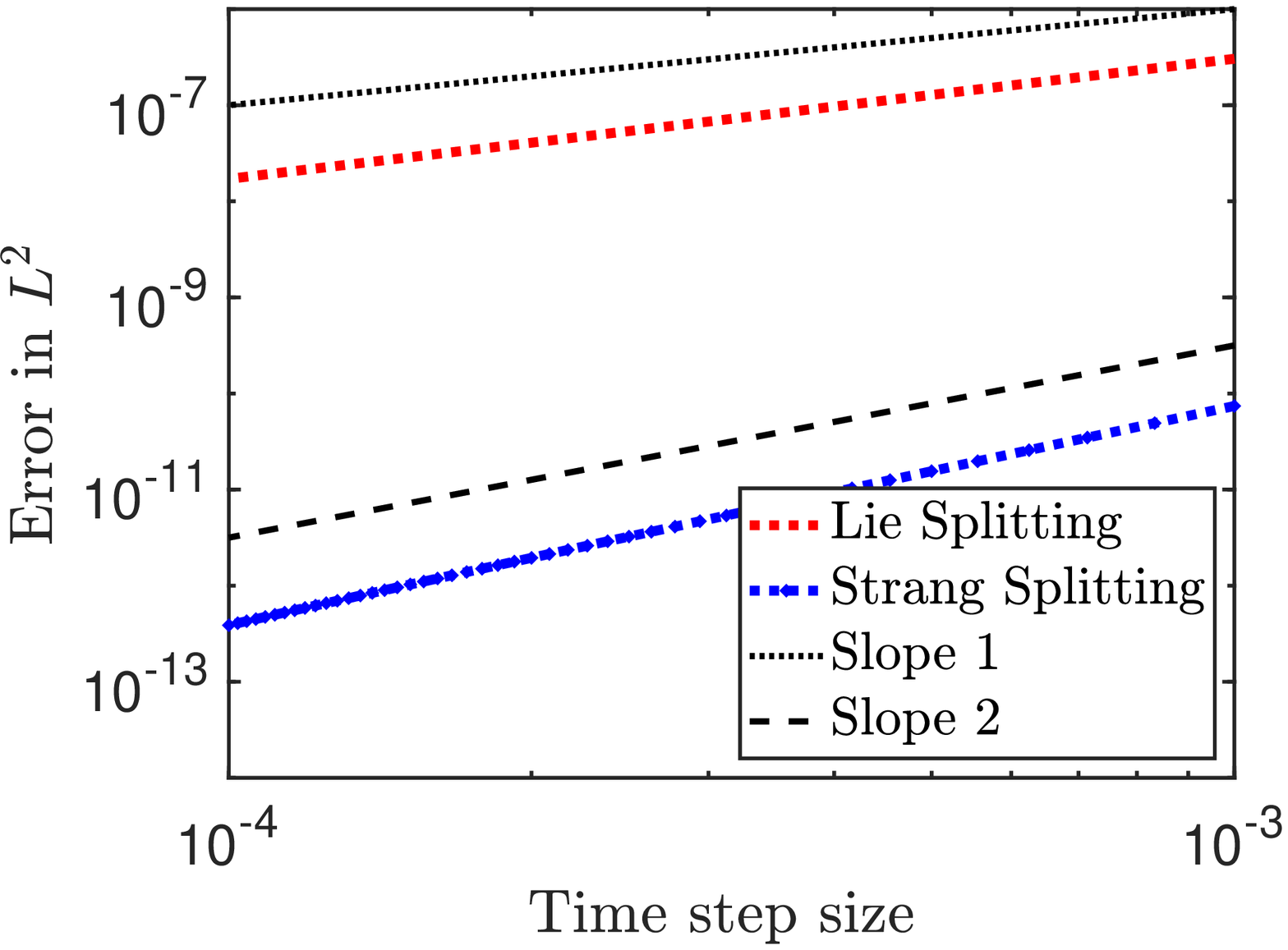}
\caption{$L^2$ error of the   Lie and Strang splitting schemes for rough and smooth initial data.  Left:  rough initial value $u(0) \in H^{1/2}$; right:  smooth initial value  $u(0) \in H^4$.}\label{fig}
\end{figure}

%
%
%

\subsection*{Acknowledgements}

{\small
KS has received funding from the European Research Council (ERC) under the European Union's Horizon 2020 research and innovation programme (grant agreement No. 850941).
}


\begin{thebibliography}{99}

\bibitem{Bao}
 { \rm W. Bao, S. Jin and P. A. Markowich},
 {\em Numerical study of time-splitting spectral discretizations of nonlinear Schr\"odinger equations in the semiclassical regimes}. SIAM J. Sci. Comput., 25:27--64 (2003).


\bibitem{Bour93a}
{\rm J. Bourgain},
{\em Fourier transform restriction phenomena for certain lattice subsets and applications to nonlinear evolution equations. Part I: Schr\"odinger equations.}
Geom. Funct. Anal. 3:209--262 (1993).
	
\bibitem{BGT04}
{\rm N. Burq, P. G\'erard, N. Tzvetkov,}
{\em Strichartz inequalities and the nonlinear Schr\"odinger equation on compact manifolds.}
 Amer. J. Math. 126:569--605 (2004).


\bibitem{ESS16}
{\rm J. Eilinghoff, R. Schnaubelt, K. Schratz},
{\em Fractional error estimates of splitting schemes for the nonlinear Schr\"odinger equation.} J. Math. Anal. Appl. 442:740--760 (2016).
\bibitem{Faou12}
{\rm E. Faou},
{\em Geometric Numerical Integration and Schr\"odinger Equations.} European Math. Soc. Publishing House, Z\"urich 2012.


\bibitem{HNW}
{\rm E. Hairer, S.P. N\o rsett, G. Wanner},
{\em Solving Ordinary Differential Equations I. Nonstiff problems.} Second edition, Springer, Berlin 1993.


\bibitem{IZ09}
{\rm L. I. Ignat, E. Zuazua},
{\em Numerical dispersive schemes for the nonlinear Schr\"odinger equation}.
SIAM J. Numer. Anal. 47:1366--1390 (2009).

\bibitem{Ignat11}
{\rm L. I. Ignat},
{\em A splitting method for the nonlinear Schr\"odinger equation.}
J. Differential Equations 250:3022--3046 (2011).


\bibitem{KOS19}
{\rm M. Kn\"{o}ller, A. Ostermann, K. Schratz},
{\em A Fourier integrator for the cubic nonlinear Schr\"{o}dinger equation with rough initial data.}
SIAM J. Numer. Anal. 57:1967--1986 (2019).



\bibitem{Linares-Ponce}
{\rm  F. Linares, G. Ponce,}
{\em Introduction to Nonlinear Dispersive Equations.}
Second edition. Springer, New York, 2015.

\bibitem{Lubich08}
{\rm C. Lubich},
{\em On splitting methods for {S}chr\"odinger-{P}oisson and cubic nonlinear {S}chr\"odinger equations.}
Math. Comp. 77:2141--2153 (2008).



\bibitem{ORSBourg}
{\rm A. Ostermann, F. Rousset,  K. Schratz},
{\em Fourier integrator for periodic NLS: low regularity estimates via discrete Bourgain spaces.}
\url{https://arxiv.org/abs/2006.12785}



\bibitem{ORS19}
{\rm A. Ostermann, F. Rousset,  K. Schratz},
{\em Error estimates of a Fourier integrator for the cubic Schr\"odinger equation at low regularity.}
\url{https://arxiv.org/abs/1902.06779}, to appear in Found. Comput. Math.

\bibitem{OS18}
{\rm A. Ostermann, K. Schratz},
{\em Low regularity exponential-type integrators for semilinear Schr\"odinger equations.}
Found. Comput. Math. 18:731--755 (2018).

\bibitem{pazy}
{\rm A. Pazy},
{\em Semigroups of Linear Operators and Applications to Partial Differential Equations.}
Springer, New York, 1983.

\bibitem{Strichartz}
{\rm R. S. Strichartz},
{\em Restrictions of Fourier transforms to quadratic surfaces and decay of solutions of wave equations.}
Duke Math. J. 44:705--714 (1977).


\bibitem{Tao06}
{\rm T. Tao},
{\em Nonlinear Dispersive Equations. Local and Global Analysis.}
Amer. Math. Soc., Providence RI, 2006.

\end{thebibliography}
\end{document}